\newif\ifArXiV
\newtheorem{theorem}{Theorem}
\newtheorem{definition}[theorem]{Definition}
\newtheorem{proposition}[theorem]{Proposition}
\newtheorem{observation}[theorem]{Observation}
\newtheorem{example}{Example}
\theoremstyle{remark}
\let\oldnl\nl% Store \nl in \oldnl
\newcommand{\nonl}{\renewcommand{\nl}{\let\nl\oldnl}}% Remove line number 
\newcommand{\nPC}{\hyperref[eq:nPC]{\texttt{(nPCA1)}}\xspace}
\newcommand{\nPCE}{\hyperref[eq:nPCE]{\texttt{(nPCA3)}}\xspace}
\newcommand{\nPCY}{\hyperref[eq:nPCY]{\texttt{(nPCA2)}}\xspace}
\newcommand{\nPCYR}{\hyperref[eq:nPCYR]{\texttt{(nPCR1)}}\xspace}
\newcommand{\nPCYRN}{\hyperref[eq:nPCYRN]{\texttt{(nPCR2)}}\xspace}
\newcommand{\tsplib}{\texttt{TSPlib}\xspace}
\newcommand{\pmed}{\texttt{pmed}\xspace}
\newcommand{\pCP}{$p$CP\xspace}
\newcommand{\npCP}{n-$p$CP\xspace}
\newcommand{\npCPA}{n-$p$CPA\xspace}
\newcommand{\npCPR}{n-$p$CPR\xspace}
\newcommand{\inI}{i \in \mathcal I}
\newcommand{\inJ}{j \in \mathcal J}
\newcommand{\inH}{h \in \mathcal H}
\newcommand{\noPP}{\texttt{B}\xspace}
\newcommand{\PP}{\texttt{P}\xspace}
\newcommand{\sH}{\texttt{PH}\xspace}
\newcommand{\R}{\texttt{(R)}\xspace}
\newcommand{\RN}{\texttt{(RN)}\xspace}
\newcommand{\RNL}{\texttt{(RNL)}\xspace}
\newenvironment{frontmatter}{}{}
\newenvironment{keyword}{\small \textbf{Keywords:}}{}
\let\address\affil
\begin{document}
		\begin{frontmatter}
			\title{Mixed-integer linear programming approaches for nested $p$-center problems with absolute and relative regret objectives}
			%
			% If the paper title is too long for the running head, you can set
			% an abbreviated paper title here
			%
			
			\ifArXiV
			\author[1,2,3]{Christof Brandstetter\thanks{christof.brandstetter@jku.at}}
			\author[2,3]{Markus Sinnl\thanks{markus.sinnl@jku.at}}
			\affil[1]{Institute of Production and Logistics Management, 
			Johannes Kepler University Linz, Linz, Austria}
            \affil[2]{Institute of Business Analytics and Technology Transformation, Johannes Kepler University Linz, Linz, Austria}
            \affil[3]{JKU Business School, Johannes Kepler University 
			Linz, Linz, Austria}
			\date{}
			\maketitle
			
			\else
			
			\author[jku,jkubus]{Brandstetter Christof}
			\ead{Brandstetter.Christof@jku.at}
			\author[jkubatt,jkubus]{Markus Sinnl}
			\ead{markus.sinnl@jku.at}
			\address[jku]{Institute of Production and Logistics Management, 
			Johannes Kepler University Linz, Linz, Austria}
			\address[jkubus]{JKU Business School, Johannes Kepler University 
				Linz, Linz, Austria}	
			\address[jkubatt]{Institute of Business Analytics and Technology Transformation, Johannes Kepler University Linz, Linz, Austria}
			\fi
%\maketitle

\begin{abstract}
We introduce the nested $p$-center problem, which is a multi-period variant of the well-known $p$-center problem. The use of the \emph{nesting} concept allows to obtain solutions, which are consistent over the considered time horizon, i.e., facilities which are opened in a given time period stay open for subsequent time periods. This is important in real-life applications, as closing (and potential later re-opening) of facilities between time periods can be undesirable.

We consider two different versions of our problem, with the difference being the objective function. The first version considers the sum of the absolute regrets (of nesting) over all time periods, and the second version considers minimizing the maximum relative regret over the time periods.

We present three mixed-integer programming formulations for the version with absolute regret objective and two formulations for the version with relative regret objective. For all the formulations, we present valid inequalities. Based on the formulations and the valid inequalities, we develop branch-and-bound/branch-and-cut solution algorithms. These algorithms include a preprocessing procedure that exploits the nesting property and also begins heuristics and primal heuristics.

We conducted a computational study on instances from the literature for the $p$-center problem, which we adapted to our problems. We also analyse the effect of nesting on the solution cost and the number of open facilities.

\begin{keyword}
location science; $p$-center problem; integer programming formulation; min-max 
objective
\end{keyword}
\end{abstract}
\end{frontmatter}

%\tableofcontents

\section{Introduction}
A crucial factor in long-term planning, particularly in \emph{multi-period facility location}, is consistency over the given \emph{time horizon} for planning. There exist many such facility location problems (see, e.g.,\cite{Laporte2019}), but they often offer solutions that are inconsistent with respect to the number of open facilities. This inconsistency can lead to the opening, closing, and potentially reopening of facilities over the given (discrete) time horizon. Such actions can incur significant monetary or environmental costs, which are undesirable. Moreover, as a consequence of this, such inconsistent solutions can be difficult to present to decision makers, as they usually are not compatible with the intuition of decision makers \cite{McGarvey2022}.
As early as 1971, the concept of modeling facility location problems to address such inconsistent solutions emerged \cite{Scott1971}, and in \cite{Roodman1975} proposed a \emph{nesting constraint} to deal with inconsistent solutions and introduced the first mixed-integer linear programming (MILP) formulation incorporating this constraint. 
%Such facility location problems fall within a subset of \emph{multi-period} problems (see Chapter 11 of \cite{Laporte2019} for a summary of multi-period location problems).
The nesting constraint has been utilized in some multiperiod facility location problems over the years (e.g., in \cite{albareda2009,bakker2024value,castro2017cutting,escudero2017solving}, see Section \ref{sec:literature} for more details); however, it was not explicitly referred to as "nesting" in these works, and the concept was not the focus of these works. The nesting concept, under this name, was revisited in \cite{McGarvey2022}, who applied it to the $p$-median problem. They suggested extending this nesting approach to other fundamental problems in location science, such as the \emph{maximum coverage problem}\footnote{During the literature review for this work we discovered that already in 1980 a nested maximum coverage problem was considered in \cite{Schilling1980}, for more details see Section \ref{sec:literature}. } or the \emph{(discrete) $p$-center problem}. In this work, we follow up on this suggestion by applying the nesting concept to the $p$-center problem to introduce the \emph{(discrete) nested $p$-center problem} (\npCP)\footnote{This work is an extension of the short paper \cite{Brandstetter2024} presented at INOC 2024 and the Master's thesis \cite{Brandstetter2024b} of the first author.} Note that there also exists a continuous version of the $p$-center problem (see, e.g., \cite{chen2009new}), however, in this work we focus on the discrete version of the problem.

Similarly as \cite{McGarvey2022} did for the $p$-median problem, we consider two versions of the \npCP. The input and the set of feasible solutions is the same for both versions, while the objective function differs. In the first version, the objective function consists of minimizing the sum of the absolute regret (of nesting) over the time horizon, and in the second version, the objective function consists of minimizing the maximum relative regret over the time horizon. 

The set of \emph{feasible solutions} for the \npCP is defined as follows:
\begin{definition}\label{def:npCP}
    Let $\mathcal I$ be a set of customer demand points, $\mathcal J$ be a set of potential facility locations with distances $d_{ij}\geq0$ between each $\inI$ and $\inJ$. Let
    $\mathcal{H}=\{1,\ldots, H\}$ denote the time horizon, and let $\mathcal P=\{p^1,\ldots p^H\}$ be a set of integers with $p^h\le p^{h+1}$ for $h=1,\ldots,H-1$ and $p^H\leq |\mathcal J|$.
    A \emph{feasible solution} $(\mathcal{J}^1, \dots \mathcal{J}^H)$ for the \npCP consists of a set $\mathcal J^h \subseteq \mathcal J$ with $|\mathcal J^h|=p^h$ for $h\in \mathcal H$. Moreover, the \emph{nesting constraint} must be fulfilled by these sets,
    i.e., $\mathcal J^h\subseteq \mathcal J^{h+1}$ must hold for $h=1,\ldots,H-1$.
\end{definition}

\begin{observation}
    In our definition of the \npCP\ we have that the number of facilities to be allowed open is non-decreasing over the time horizon. The optimal solution to this problem is also the optimal solution to the problem-variant, where the
    number of facilities to be allowed open is non-increasing over the time horizon (and the nesting constraint is accordingly adapted to $\mathcal J^{h+1}\subseteq \mathcal J^h$), as these problems are equivalent.
\end{observation}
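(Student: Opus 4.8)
The plan is to exhibit an explicit bijection between the feasible solutions of the two problem variants and to verify that this bijection preserves the value of each of the two objective functions (sum of absolute regrets, and maximum relative regret); the claimed equivalence then follows immediately.

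First I would introduce the \emph{time-reversal map} $\phi$ sending a tuple $(\mathcal J^1,\dots,\mathcal J^H)$ to $(\tilde{\mathcal J}^1,\dots,\tilde{\mathcal J}^H)$ with $\tilde{\mathcal J}^h := \mathcal J^{H+1-h}$. If $(\mathcal J^1,\dots,\mathcal J^H)$ is feasible for the non-decreasing variant with cardinality sequence $p^1\le\dots\le p^H$, then $|\tilde{\mathcal J}^h| = p^{H+1-h} =: \tilde p^h$, the sequence $\tilde p^1\ge\tilde p^2\ge\dots\ge\tilde p^H$ is non-increasing, and $\tilde{\mathcal J}^{h+1} = \mathcal J^{H-h} \subseteq \mathcal J^{H+1-h} = \tilde{\mathcal J}^h$, so the adapted nesting constraint $\tilde{\mathcal J}^{h+1}\subseteq\tilde{\mathcal J}^h$ holds. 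Hence $\phi$ maps feasible solutions of the non-decreasing variant into feasible solutions of the non-increasing variant, and since applying $\phi$ twice is the identity, it is a bijection between the two feasible sets.

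Next I would show that $\phi$ preserves the objective. The point is that every per-period quantity entering either objective --- the $p$-center service cost $c(\mathcal J^h)=\max_{i\in\mathcal I}\min_{j\in\mathcal J^h} d_{ij}$ in period $h$, the optimal unconstrained $p^h$-center cost used as the regret baseline, and hence the absolute and the relative regret in period $h$ --- depends on the period only through the pair $(\mathcal J^h, p^h)$, not through the index $h$ itself. Under $\phi$ the multiset $\{(\mathcal J^h,p^h) : h\in\mathcal H\}$ is unchanged and only the ordering of the periods is permuted. Since both aggregating operators, $\sum_{h\in\mathcal H}(\cdot)$ and $\max_{h\in\mathcal H}(\cdot)$, are symmetric in their arguments, the objective value of $\phi(\mathcal J^1,\dots,\mathcal J^H)$ in the non-increasing variant equals that of $(\mathcal J^1,\dots,\mathcal J^H)$ in the non-decreasing variant. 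Combining this with the bijection from the previous step, optimal solutions are mapped to optimal solutions and the optimal values coincide, which is the assertion of the observation.

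The argument is essentially bookkeeping; the one place I would be careful in the write-up is the verification that the relative-regret denominator used in period $h$ (the optimal unconstrained $p^h$-center value, together with whatever convention is adopted when it is zero) is genuinely a function of $p^h$ alone, so that it is carried along correctly by the reindexing $h\mapsto H+1-h$. Once that is pinned down, the symmetry of $\sum$ and $\max$ does the rest.
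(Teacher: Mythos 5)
Your time-reversal bijection argument is correct and is exactly the formalization of the symmetry the paper implicitly relies on (the Observation is stated without a written proof, the equivalence being taken as evident). Reversing the period index preserves feasibility, cardinalities, and the multiset of pairs $(\mathcal J^h,p^h)$, and since both $\sum_{\inH}$ and $\max_{\inH}$ are symmetric the objective values coincide, so nothing is missing.
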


Given a feasible solution and $h \in H$ we define the \emph{absolute regret} and \emph{relative regret} for this $h \in H$.
\begin{definition}
    For a given time period $\inH$ and set $\mathcal J^h$, let $d(\mathcal J^h)=\max_{i \in \mathcal \mathcal I} \min_{\inJ^h} d_{ij}$ and $d^{h*}$ be the optimal objective function value of the $p$-center problem for $p=p^h$. The \emph{absolute regret} for a given $\inH$ and set $\mathcal{J}^h$ is defined as $\mathcal{R_A}(h, \mathcal{J}^h) = d(\mathcal{J}^h)-d^{h*}$ and the \emph{relative regret} as $\mathcal {R_R}(h, \mathcal J^h) = \mathcal {R_A} (h, \mathcal J^h)/d^{h*}$. 
 \end{definition}

With these definitions we can define the two problems we consider in the paper, namely the \emph{(discrete) nested $p$-center problem with the sum of absolutes regrets objective} (\npCPA), and the \emph{(discrete) nested $p$-center problem with the minimizing the maximum relative regret objective} (\npCPR).

\begin{definition}
The \emph{(discrete) nested $p$-center problem with the sum of absolute regrets objective} (\npCPA) is defined as the problem of finding a feasible solution $(\mathcal{J}^1, \dots \mathcal{J}^H)$ to the \npCP
    which minimizes $\sum_{\inH}\mathcal{R_A}(h, \mathcal{J}^h)$, i.e., the sum of the absolute regret over all time periods.
The \emph{(discrete) nested $p$-center problem with the minimizing the maximum relative regret objective} (\npCPR) is defined as the problem of finding a feasible solution $(\mathcal{J}^1, \dots \mathcal{J}^H)$ to the \npCP which minimizes $\max_{\inH}\mathcal{R_R}(h, \mathcal{J}^h)$, i.e., the maximum relative regret over all time periods.    
\end{definition}

\begin{observation}\label{obs:ignore}
    The objective function of the \npCPA\ can be viewed as minimizing the sum of the distances $d(\mathcal{J}^h)$ over the time periods $\inH$, as the optimal objective function values $d^{h*}$ of the $p$-center problems for $p = p^h$,
    are constant for each time period $h$, they can be ignored in the optimization process.
\end{observation}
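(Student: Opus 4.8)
The plan is to unfold the definition of the \npCPA\ objective and split it into a solution-dependent part and a constant. For any feasible solution $(\mathcal{J}^1,\dots,\mathcal{J}^H)$ of the \npCP, the definition of the absolute regret gives
\[
\sum_{\inH}\mathcal{R_A}(h,\mathcal{J}^h)=\sum_{\inH}\bigl(d(\mathcal{J}^h)-d^{h*}\bigr)=\sum_{\inH}d(\mathcal{J}^h)-\sum_{\inH}d^{h*}.
\]
The first step is exactly this algebraic rearrangement, which uses nothing beyond the definitions of $\mathcal{R_A}$ and of the \npCPA\ objective.

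The second step is to argue that $\sum_{\inH}d^{h*}$ does not depend on the chosen feasible solution. By definition $d^{h*}$ is the optimal objective value of the classical (non-nested) $p$-center problem with $p=p^h$, hence a function only of the instance data $\mathcal I$, $\mathcal J$, $(d_{ij})$ and the integer $p^h$; it is finite because $p^h\le p^H\le|\mathcal J|$ guarantees the existence of a $p$-center solution opening exactly $p^h$ facilities. Therefore $C:=\sum_{\inH}d^{h*}$ is a single constant shared by all feasible solutions of the \npCP.

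The final step is the conclusion: since the feasible region of the \npCPA\ coincides with the set of feasible solutions of the \npCP\ (the two problems differ only in their objective), and since every such solution has \npCPA\ objective value $\sum_{\inH}d(\mathcal{J}^h)-C$ with $C$ fixed, a feasible solution minimizes $\sum_{\inH}\mathcal{R_A}(h,\mathcal{J}^h)$ if and only if it minimizes $\sum_{\inH}d(\mathcal{J}^h)$, the two optimal values differing by exactly $C$. There is no genuine obstacle in this argument; the only point deserving one line of care is verifying that $d^{h*}$ is truly independent of the nested solution under consideration — it is a property of the relaxed, non-nested $p$-center instance — so that it may legitimately be pulled out of the minimization as an additive constant.
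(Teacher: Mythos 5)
Your proposal is correct and follows essentially the same reasoning the paper uses: the paper states this as an observation whose justification is precisely that $\sum_{\inH} d^{h*}$ is a solution-independent constant, so minimizing $\sum_{\inH}\mathcal{R_A}(h,\mathcal{J}^h)$ is equivalent to minimizing $\sum_{\inH} d(\mathcal{J}^h)$. Your write-up merely makes the algebraic split and the constancy (and finiteness) of $d^{h*}$ explicit, which is fine but not a different route.
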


\begin{observation}
    For $|\mathcal H|=1$ the problems reduce to the (classical) $p$-center problem (\pCP) which was introduced by Hakimi \cite{Hakimi1964} in 1964. The \pCP\ is NP-hard for $p \geq 2$ \cite{kariv1979}.
\end{observation}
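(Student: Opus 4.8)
The plan is to verify the two claims separately: the reduction to the classical $p$-center problem (\pCP) when $|\mathcal{H}|=1$, which is essentially a matter of specializing Definition~\ref{def:npCP} and the two objective functions; and the NP-hardness for $p\ge 2$, which I would not prove but rather inherit from the cited result of Kariv and Hakimi~\cite{kariv1979}. First I would treat the feasible region: when $\mathcal{H}=\{1\}$, Definition~\ref{def:npCP} says a feasible solution is a single set $\mathcal{J}^1\subseteq\mathcal{J}$ with $|\mathcal{J}^1|=p^1$, and the nesting constraints $\mathcal{J}^h\subseteq\mathcal{J}^{h+1}$ for $h=1,\dots,H-1$ range over the empty index set and are therefore vacuously satisfied. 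Hence the set of feasible solutions of the \npCP\ coincides with $\{\mathcal{J}'\subseteq\mathcal{J}:|\mathcal{J}'|=p^1\}$, which is exactly the feasible region of the \pCP\ with $p=p^1$.

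Next I would check that the objectives agree up to the transformations allowed in a reduction. For the \npCPA, the objective at a feasible $(\mathcal{J}^1)$ equals $\sum_{h\in\mathcal{H}}\mathcal{R_A}(h,\mathcal{J}^h)=\mathcal{R_A}(1,\mathcal{J}^1)=d(\mathcal{J}^1)-d^{1*}$; since $d^{1*}$ is a constant (Observation~\ref{obs:ignore}), minimizing this over the feasible region is equivalent to minimizing $d(\mathcal{J}^1)=\max_{i\in\mathcal{I}}\min_{j\in\mathcal{J}^1}d_{ij}$, i.e., to solving the \pCP\ for $p=p^1$ (with optimal value $0$). For the \npCPR, the objective equals $\max_{h\in\mathcal{H}}\mathcal{R_R}(h,\mathcal{J}^h)=\mathcal{R_R}(1,\mathcal{J}^1)=(d(\mathcal{J}^1)-d^{1*})/d^{1*}$, which, assuming $d^{1*}>0$, is a strictly increasing affine function of $d(\mathcal{J}^1)$; so its minimizers are again exactly the optimal \pCP\ solutions. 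In both cases the reduced problem and the \pCP\ have the same optimal solutions, which is what ``reduces to'' means here.

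Finally, the NP-hardness statement for $p\ge 2$ is the classical theorem of~\cite{kariv1979}, which I would cite directly rather than reprove; combined with the reduction above it also shows that \npCPA\ and \npCPR\ are NP-hard (take $H=1$ and $p^1\ge 2$), although this corollary is not part of the stated observation. I do not anticipate any genuine obstacle, since the whole argument amounts to substituting $H=1$ into the definitions; the only point that needs a word of care is the division by $d^{1*}$ in the relative-regret objective, which is why I would record the (natural) nondegeneracy assumption $d^{1*}>0$.
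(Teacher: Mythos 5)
Your proposal is correct and follows exactly the reasoning the paper intends: the paper states this as an unproved observation, and your argument --- nesting constraints vacuous for $H=1$, feasible region identical to that of the \pCP\ with $p=p^1$, and both objectives reducing to minimizing $d(\mathcal{J}^1)$ up to the constant $d^{1*}$ (resp.\ the affine rescaling by $1/d^{1*}$), with NP-hardness inherited from \cite{kariv1979} --- is just the spelled-out version of that immediate substitution. The only addition, the nondegeneracy caveat $d^{1*}>0$ for the relative-regret objective, is sensible and implicit in the paper's definition of $\mathcal{R_R}$.
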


\begin{figure}[h!tb]
    \centering
    \includegraphics[width=0.7\linewidth]{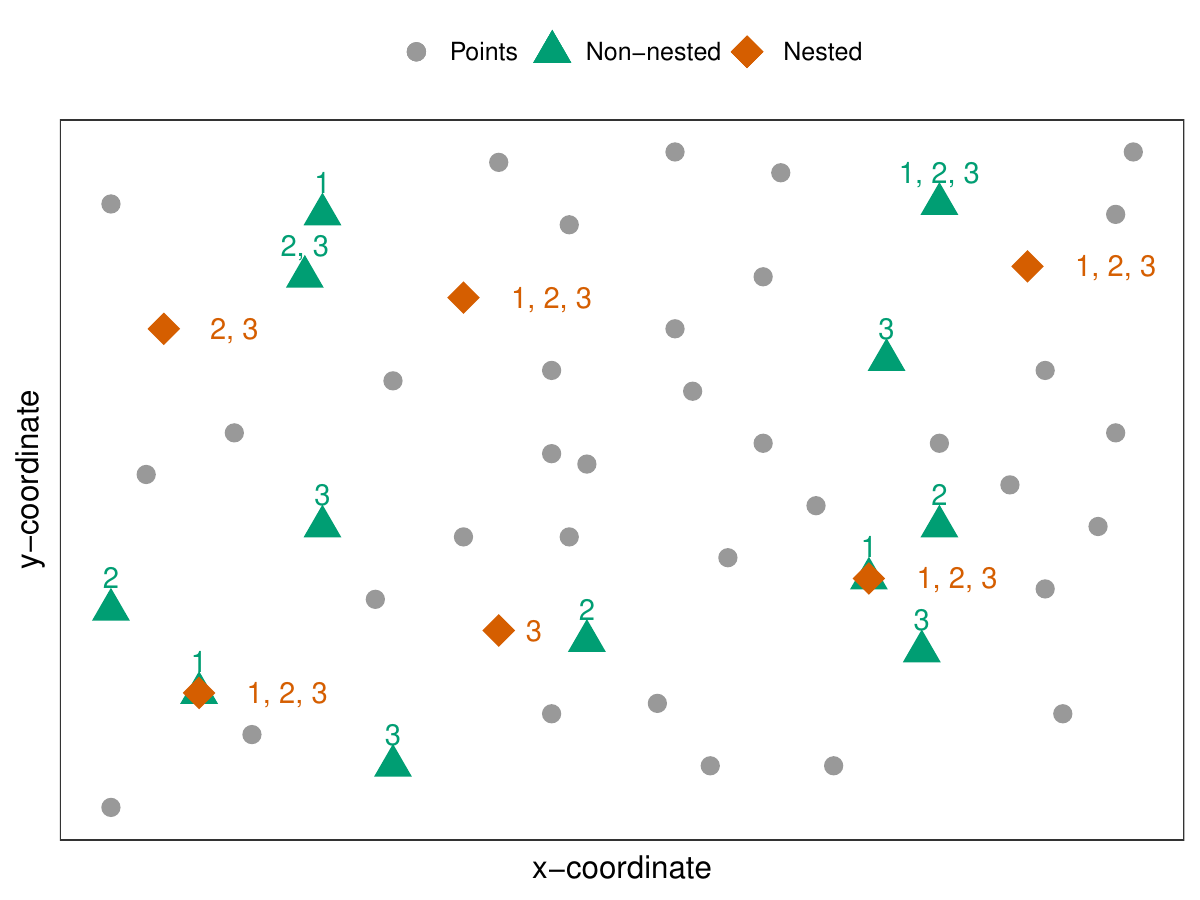}
    \caption{\textmd{Instance \texttt{eil51} with the optimal solutions for \npCPA (rectangles) with $\mathcal{P} = \{4, 5, 6\}$ and for \pCP (triangles) for $p=4,5,6$}}
    \label{fig:nest-non}
    %\vspace{-10mm}
\end{figure}

Figure \ref{fig:nest-non} shows an exemplary instance of the (nested) $p$-center problem (the \texttt{eil51} instance of the \tsplib\ \cite{Reinelt1991}). In this instance each point (visualized as grey dots) can be a potential facility
location or a customer demand point, so $\mathcal I=\mathcal J$ holds. The distance between two points in this instance is the Euclidean distance. In this figure, we illustrate an optimal solution of the \npCPA for $H = 3$ with
$\mathcal P = \left\{4,5,6\right\}$ indicated by the orange rectangles and the optimal solutions for the \pCP when individually solving it for $p=4,5,6$ indicated by the green triangles. The numbers beside or above indicate the
time period in which a facility was opened at this location. For example, if the numbers one, two, and three are above the green triangles, it means that in every individual solution of the $p$-center problem, a facility is opened at
this location. If the numbers two and three are depicted beside the orange rectangles, it means that at this location a facility was opened in the second time period and it could not be closed again as it is not allowed by the nesting.
The optimal solutions of the \pCP opened twelve different facilities over the three time periods and only one facility was open in all time periods, while the solution of the \npCPA only opened six facilities, so only half of the facilities
were opened. There might be optimal \pCP solutions that would require fewer facilities over the three time periods. For this instance, the sum of $d(\mathcal{J}^h)$ for $\inH$ of the \npCPA is 61 and the optimal objective function values of
the \pCP\ are 22, 19, 17 with the sum of 58. Thus, the absolute regret is 61-58=3.

\subsection{Literature review} \label{sec:literature}
There exists extensive work on the $p$-center problem, multi-period location problems in general and some work on nested facility location problems. There also exists a wide variety of different in-exact solution approaches like heuristics and approximation algorithms for the problems discussed in the following. We focus this review on exact solution approaches, as this work considers the development of exact solution algorithms.

\subsubsection{The \texorpdfstring{$p$}{p}-center problem} \label{sec:lit_pcenter}
The \pCP is a fundamental problem in location science with applications in countless areas such as emergency service location and planning of relief actions in humanitarian crisis \cite{calik2013double,jia2007modeling,lu2013robust} but also in clustering of large-scale data \cite{malkomes2015fast}, computer vision \cite{friedler2010approximation} and feature selection \cite{meinl2011maximum}. There also exist many variants of the \pCP, such as the $p$-next center problem \cite{albareda2015centers}, the $\alpha$-neighbor $p$-center problem \cite{gaar2023exact}, the capacitated $p$-center problem \cite{khuller2000capacitated} and countless others.

The \pCP was introduced by \cite{Hakimi1964} and the first solution algorithm for the \pCP\ was developed by \cite{minieka1970}. The author used the relationship of the \pCP to the \emph{set cover problem}, as solving the \pCP for a fixed distance can be transformed into set cover problem. This connection to the set cover problem was also used in more recent years by \cite{chen2009new,contardo2019scalable} which are amongst the state-of-the-art approaches for the \pCP. The classical MILP formulation can be found in textbooks like \cite[Chapter 5]{wiley2013}. However, the linear programming (LP) relaxation of this formulation is known to be quite bad and solution approaches based on it are thus not competitive with other exact solution approaches. There also exist other MILP formulations that have better LP-relaxation bounds \cite{Elloumi2018,calik2013double,Elloumi2004}. In \cite{GAAR2022} the authors present a branch-and-cut algorithm which is based on a projection of the classic formulation, which is also improved with valid inequalities such that the LP-relaxation bounds are similar to the bounds of \cite{Elloumi2018,calik2013double,Elloumi2004}. Our formulations for the \npCPA and the \npCPR are based on these formulations for the \pCP. For heuristic approaches to the \pCP we refer to the recent survey by \cite{garcia-diaz2019}. 

\subsubsection{Nested and other multi-period facility location problems}\label{sec:lit_nested}

Location problems needing to fulfill a nesting constraint can be categorized as one problem family within the area of the multi-period location problems (for a general overview of the area, see, e.g., \cite[Chapter 11]{Laporte2019}). To the best of our knowledge, the first work to discuss location problems, in which facilities are opened iteratively over a certain time horizon was \cite{Scott1971} in 1971. The authors compared a dynamic programming system that takes into account the complete
time horizon with a myopic system, which optimizes the next period without considering any later periods. Their dynamic programming system outperformed the myopic system for larger time horizons.
A first MILP formulation for such
problems was introduced in \cite{Roodman1975}, where the goal was to minimize the operational cost of closing facilities iteratively over a certain time horizon. This was also the first to use a nesting-type constraint which
enforces open facilities to be open until the end of the time horizon. In addition, the authors presented a generalization of the formulation, in which they start with a set of facilities that are open at the beginning and a set of potential
facilities that can be opened. The starting facilities can be closed at any point in the time horizon, but once closed, they must remain closed, while the potential facilities can be opened but not closed again. This allows for a restricted redistribution of facilities. 

Furthermore in 1980, a multi-objective and multi-period maximum coverage problem was presented by \cite{Schilling1980}, where the author developed a modeling framework that allows to explore the trade-off of planning strategies with a focus on present service goals and future service goals, by combining the goals in a multi-objective programming model. The model excluded the option of closing facilities by the nesting constraint. 
In 2009 Albareda-Sambola et al. proposed a MILP formulation for the incremental service facility location problem, where a certain service level has to be maintained in every time period, where service level means that a certain number of customers needs to be solved in a given time period. Moreover, as soon as a customer gets served in a time period, she also needs to be served in all subsequent time periods, and she needs to be served from the same facility in all time periods. This is implicitly causing a nesting constraint on the facilities in the solution. They solved their problem by means of a Langrangian dual approach combined with an ad hoc heuristic. Further incremental service facility location problems can be found in e.g. \cite{Ljubic2019, Kalinowski2015}. 
In 2017 Castro, Nasini and Saldanha-da-Gama presented a Benders decomposition approach for a capacitated multi-period facility location problem with a nesting constraint, see \cite{castro2017cutting}. 
Also in 2017, Escudero and Pizarro Romero introduced a quadratic 0-1 programming approach for a multi-period facility location problem where the solution has to fulfill nesting constraints. They developed a matheuristic algorithm to solve large scale instances, see \cite{escudero2017solving}. 
More recently, the nesting concept was revisited and applied to the $p$-median problem for two objective functions, minimizing the sum of the absolute regrets and minimizing the maximum relative regret by \cite{McGarvey2022}. The authors propose a heuristic based on Lagrangian relaxation to solve both problem-variants. 
Bakker and Nickel focus in a recent study on the value of the multi-period solution, which is a quantifier of the benefit resulting from using a multi-period model over a static one period model. To link the time periods together they used the nesting constraint, see \cite{bakker2024value}.

Aside from explicitly stating a nesting constraint, a further approach to linking the time periods is by penalizing the closing of facilities with high costs, see, e.g., \cite{galvao1992, Wesolowsky1975}. Recent advancements in multi-period location problems have increasingly focused on dynamic facilities, allowing for the relocation of these facilities over the planning horizon. For instance, Calogiuri et al. developed a heuristic to address the multi-period $p$-center problem, incorporating varying arc-traversal times and mobile facilities that can be relocated multiple times during the planning horizon. Correia and Melo extended the classical multi-period facility location problem by considering two customer segments, time-sensitive and time-insensitive customers. They start with an already existing network of facilities and determine new facilities and their capacities , but also relocate existing ones. In 2016 \cite{CORREIA2016} they proposed two MILP formulations to redesign the facility network at minimal cost, while in 2017 they further extended the problem by allowing for capacity changes over time \cite{Correia2017}. In 2020 Sauvey et al. developed a heuristic approach to this problem, see \cite{SAUVEY2020}. In \cite{GUDEN2019} developed a branch-and-price algorithm for the dynamic $p$-median problem with mobile facilities, where some facilities cannot be moved or relocated while mobile facilities can be relocated in each time period. Furthermore, in 2015 \cite{VANDENBERG2015} introduced a time-dependent probabilistic location model for emergency medical vehicles with the goal of maximizing expected coverage throughout the day while minimizing the number of open facilities and relocations.

\section{Mixed-Integer linear programming formulations for the \texorpdfstring{nested $p$-center problem}{nested p-center problem} with the sum of absolute regrets objective}\label{sec:milp-formulations}

In this section, we first present a formulation for the \npCPA based on the classical textbook formulation of the \pCP followed by a formulation based on the formulation of \cite{GAAR2022}. We present valid inequalities for both formulations. At the end of the section, we present a third formulation based on the formulation(s) of \cite{Elloumi2018, Elloumi2004}. For this third formulation, we present a variable-fixing procedure. In all three formulations we use Observation \ref{obs:ignore}, i.e., the fact that in the sum of absolute regrets objective the objective function values of the \pCP problems over the time horizon is just a constant which can thus be ignored. Note that for all three formulations, in case $|\mathcal H|=1$ the respective formulations of the \pCP are obtained.

\subsection{First formulation}\label{sec:xy}

The first formulation, denoted as \nPC, uses two sets of binary variables. The variable $x_{ij}^h$ indicates if customer demand point $\inI$ is assigned to potential facility location $\inJ$ in time period $\inH$ and the variable $y_j^h$ indicates if a facility is opened at the potential facility location $\inJ$ in time period $\inH$. The continuous variables $z^h$ measure the maximum distance from any customer demand point to its nearest open facility in time period $\inH$. Using these variables, the \npCPA can be formulated as follows.

\begin{subequations}
    \begin{align}
         & \nPC\  & \min         &  & \sum_{\inH}z^{h}          &                                      & \label{eq:nPC-objective}                                              \\
         &        & \text{ s.t.} &  & \sum_{\inJ}y_{j}^h        & =     p^{h}                          & \forall \inH                                  \label{eq:nPC-numOfFac} \\
         &        &              &  & \sum_{\inJ}x_{ij}^h       & =     1                              & \forall \inI, \inH                            \label{eq:nPC-oneToOne} \\
         &        &              &  & x_{ij}^h                  & \leq  y_j^{h}                        & \forall \inI, \inJ, \inH                      \label{eq:nPC-onlyOpen} \\
         &        &              &  & \sum_{\inJ}d_{ij}x_{ij}^h & \leq  z^{h}                          & \forall \inI, \inH                            \label{eq:nPC-zPush}    \\
         &        &              &  & y_{j}^h                   & \geq  y_j^{h-1}                      & \forall \inH \setminus \left \{1 \right \}    \label{eq:nPC-nested}   \\
         &        &              &  & x_{ij}^h,                 & \in   \left \{\text{0, 1}\right \}\  & \forall \inI, \inJ, \inH                      \label{eq:nPC-binX}     \\
         &        &              &  & y_{j}^h,                  & \in   \left \{\text{0, 1}\right \}   & \forall \inJ, \inH                            \label{eq:nPC-binY}     \\
         &        &              &  & z^{h}                     & \in   \mathbb R_{\ge 0}              & \forall \inH                                  \label{eq:nPC-nonnegZ}
    \end{align}  \label{eq:nPC}
\end{subequations}

The objective function \eqref{eq:nPC-objective} minimizes the sum over the distances $z^h$ over all time periods. The constraints \eqref{eq:nPC-numOfFac} ensure that $p^h$ facilities are opened in time period $h$.
Constraints \eqref{eq:nPC-oneToOne} ensure that each customer is assigned to only one facility in each time period. The constraints \eqref{eq:nPC-zPush} are pushing the decision variables $z^h$ to the maximum distance of any assigned
customer-facility combination in each time period. Each customer can only be assigned to an open facility, which is ensured by constraints \eqref{eq:nPC-onlyOpen}. The nesting constraints \eqref{eq:nPC-nested} ensure that each facility,
which is opened in time period $h$, is also open in time period $h+1$, so once a facility is opened in a time period, it cannot be closed in later time periods. Without this constraint, the formulation would just represent the sum over
the individual \pCP for each time period. The remaining constraints \eqref{eq:nPC-binX} - \eqref{eq:nPC-nonnegZ} are the binary constraints for the variables $x_{ij}^h$ and $y_{j}^h$ and the non-negativity constraint for
variables $z^h$. This formulation consists of $\mathcal{O}\left( \left| \mathcal{I} \right| \left| \mathcal{J} \right| \left| \mathcal{H} \right|\right)$ variables and constraints.

Given any lower bound on a decision variable $z^h$ (e.g., a lower bound on the objective value of the \pCP for period $h$), the following valid inequalities can be derived. They are based on Lemma 5 of \cite{GAAR2022} which proposed a similar idea for the \pCP. 
 
\begin{proposition}\label{prop:xy-lift}
    For a given $\inH$, let $LB^h \geq 0$ be a lower bound on the value of decision variable $z^h$ of \nPC for any optimal solution. Then
    \begin{equation}
        z^h \geq \sum_{\inJ} \max \left\{LB^h, d_{ij}\right\}x_{ij}^h \label{eq:nPC-OPT}
    \end{equation}
    is a valid inequality for \nPC\ i.e. every feasible solution of \nPC\ fulfills \eqref{eq:nPC-OPT}.
\end{proposition}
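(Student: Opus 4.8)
The plan is to check \eqref{eq:nPC-OPT} one feasible solution at a time, and for a fixed customer $\inI$ (the inequality is understood to hold for every such $i$) and a fixed time period $\inH$, to reduce it to constraints already present in \nPC. So let $(x,y,z)$ be a feasible solution. Since the assignment variables are binary and $\sum_{\inJ}x_{ij}^h=1$ by \eqref{eq:nPC-oneToOne}, there is exactly one facility $\hat{\jmath}$ with $x_{i\hat{\jmath}}^h=1$, and $x_{ij}^h=0$ for $j\neq\hat{\jmath}$. Consequently the right-hand side of \eqref{eq:nPC-OPT} collapses to $\max\{LB^h,d_{i\hat{\jmath}}\}$, so what remains to show is $z^h\geq d_{i\hat{\jmath}}$ and $z^h\geq LB^h$.

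The first inequality is just constraint \eqref{eq:nPC-zPush} for this $i$ and $h$, since $\sum_{\inJ}d_{ij}x_{ij}^h=d_{i\hat{\jmath}}$. The second is the property assumed of $LB^h$. Taking the maximum of the two then yields \eqref{eq:nPC-OPT}. Put differently, \eqref{eq:nPC-OPT} is nothing but the pointwise maximum of \eqref{eq:nPC-zPush} and the bound $z^h\ge LB^h$, with the constant $LB^h$ rewritten as $LB^h\sum_{\inJ}x_{ij}^h$ so that it can be merged into a single sum over $j$; this is essentially the same trick as in Lemma~5 of \cite{GAAR2022}.

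There is no real technical obstacle here; the only point that deserves a sentence of care is the exact sense in which $z^h\ge LB^h$ holds, since the statement advertises that \eqref{eq:nPC-OPT} is satisfied by \emph{every} feasible solution. For the canonical choice $LB^h=d^{h*}$ this is automatic: in any feasible solution the set $\mathcal J^h=\{\inJ:y_j^h=1\}$ has cardinality $p^h$ by \eqref{eq:nPC-numOfFac}, constraints \eqref{eq:nPC-onlyOpen} force the facility assigned to each $i$ to lie in $\mathcal J^h$, and then \eqref{eq:nPC-zPush} gives $z^h\ge\max_{\inI}\min_{j\in\mathcal J^h}d_{ij}=d(\mathcal J^h)\ge d^{h*}$. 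If instead $LB^h$ is a sharper bound only known to hold in optimal solutions (for instance one derived from the nesting constraints \eqref{eq:nPC-nested} together with the objective), the same computation shows \eqref{eq:nPC-OPT} removes no optimal solution, which is all that is required for its use as a cut; alternatively one may first add the (valid) inequality $z^h\ge LB^h$ to the model. In either case the two-line case analysis above closes the proof.
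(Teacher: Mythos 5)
Your proof is correct and follows essentially the same argument as the paper's: in both cases the inequality is reduced, for the unique facility $\hat{\jmath}$ with $x_{i\hat{\jmath}}^h=1$, to the two cases $z^h\geq d_{i\hat{\jmath}}$ (constraint \eqref{eq:nPC-zPush}) and $z^h\geq LB^h$, the paper merely phrasing this as a proof by contradiction. Your closing remark about the distinction between bounds valid for all feasible solutions versus bounds only valid for optimal solutions is a point the paper's proof glosses over (it simply invokes ``valid by definition of $LB^h$''), and your observation that in the latter case the inequality is still usable as a cut because it removes no optimal solution is exactly the sense in which the paper employs it.
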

\begin{proof}
Suppose there exists a feasible solution $(\bar x,\bar y,\bar z)$ of \nPC which violates \eqref{eq:nPC-OPT} for a given $h \in H$. Thus, we must have $\bar z^h < \sum_{\inJ} \max \left\{LB^h, d_{ij}\right\}\bar x_{ij}^h$ for at least one customer $i$. Due to constraints \eqref{eq:nPC-oneToOne} and \eqref{eq:nPC-binX} exactly one variable $\bar x_{ij}$ for this customer $i$ has value one and all the others are zero, let $j'$ be the index of the variable which has value one. We proceed by a case distinction.

\begin{itemize}
\item $d_{ij'}>LB^h$: In this case the lifting does not have any effect. We obtain a contradiction to our assumption as due to constraints \eqref{eq:nPC-zPush} of \nPC we must have $\bar z^h\geq d_{ij'}$.
\item $d_{ij'} \leq LB^h$: The constraint \eqref{eq:nPC-OPT} for the given solution and the considered $i$ and $h$ reduces to $\bar z^h \geq LB^h$ which is valid by definition of $LB^h$. Thus we also arrive at contradiction. $\Box$
\end{itemize}
\end{proof}

Note that we need to have \emph{any} optimal solution in above proposition, as there can be multiple optimal solutions for an instance of the \npCPA and these solutions may have different values for the same $z^h$. For the other formulations, we present valid inequalities based on the same idea in the following sections, the proofs for them follow similar arguments as the proof above. In Section \ref{sec:algorithm} we detail how to use the valid inequalities in our solution algorithm.
%One value fulfilling this condition is for example the optimal solution of the $\pCP$ for $p=p^h$.

\subsection{Second formulation}\label{sec:y}
This formulation, denoted as \nPCY, uses the same variables as \nPC, except for the $x$-variables, which are not 
necessary for this formulation.
\begin{subequations}\label{eq:nPCY}
    \begin{align}
         & \nPCY & \min         &  & \sum_{\inH} z^{h}   &                                                                     & \label{eq:nPCY-objective}                                                  \\
         &       & \text{ s.t.} &  & \sum_{\inJ} y_{j}^h & = p^h                                                               & \forall \inH                                      \label{eq:nPCY-numOfFac} \\
         &       &              &  & z^{h}               & \geq d_{ij} - \sum_{j':d_{ij'} < d_{ij}} (d_{ij} - d_{ij'})y_{j'}^h & \forall \inI, \inJ, \inH                          \label{eq:nPCY-zPush}    \\
         &       &              &  & y_j^{h}             & \geq y_j^{h-1}                                                      & \forall \inJ, \inH \setminus \left \{1 \right \}  \label{eq:nPCY-nested}   \\
         &       &              &  & y_{j}^h             & \in \{\text{0, 1}\}                                                 & \forall \inJ, \inH                                \label{eq:nPCY-binY}     \\
         &       &              &  & z^{h}               & \in \mathbb R_{\ge 0}                                               & \forall \inH                                      \label{eq:nPCY-nonnegZ}
    \end{align}
\end{subequations}

The objective function is given as \eqref{eq:nPCY-objective} which minimizes the sum over the decision variables $z^h$ over all time periods. The constraints \eqref{eq:nPCY-numOfFac} ensure that only $p^h$ facilities are opened in
time period $h$. Constraints \eqref{eq:nPCY-zPush} push the decision variables $z^h$ to the largest distance of any customer to its nearest open facility. The nesting constraint is again given as \eqref{eq:nPCY-nested} and the
remaining constraints \eqref{eq:nPCY-binY} and \eqref{eq:nPCY-nonnegZ} are the binary and non-negativity constraints, respectively. This formulation consists of
$\mathcal{O}\left( \left| \mathcal{I} \right| \left| \mathcal{J} \right| \left| \mathcal{H} \right| \right)$ constraints and $\mathcal{O}\left( \left| \mathcal{J} \right| \left| \mathcal{H} \right| \right)$ variables.

%We can pose valid inequalities in a similar way as done for \nPC.
\begin{proposition}\label{prop:y-lift}
   For a given $\inH$, let $LB^h \geq 0$ be a lower bound on the value of decision variable $z^h$ of \nPCY for any optimal solution. Then
    \begin{equation}
        z^h \geq \max \left\{LB^h, d_{ij}\right\} - \sum_{j':d_{ij'} < d_{ij}} \left(\max \left\{LB^h, d_{ij}\right\} - \max \left\{LB^h, d_{ij'}\right\}\right)y_{j}^h \label{eq:nPCY-OPT} %\tag{nL-OPT}
    \end{equation}
    is a valid inequality for \nPCY\ i.e. every feasible solution of \nPCY\ fulfills \eqref{eq:nPCY-OPT}.
\end{proposition}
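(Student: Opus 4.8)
The plan is to show directly that, for every feasible solution $(\bar y,\bar z)$ of \nPCY\ and every $\inI,\inJ,\inH$, the right-hand side of \eqref{eq:nPCY-OPT} never exceeds $\bar z^h$; informally, \eqref{eq:nPCY-OPT} is just constraint \eqref{eq:nPCY-zPush} rewritten with the truncated distances $\max\{LB^h,d_{ij}\}$, and it stays valid because of the $p$-center structure together with $\bar z^h\ge LB^h$. First I would fix $i,j,h$ and a feasible $(\bar y,\bar z)$, and let $j^{*}$ be a closest open facility to $i$ in period $h$, i.e.\ $d_{ij^{*}}=m_i:=\min_{j'\in\mathcal J^h}d_{ij'}$ (well defined as $\mathcal J^h\neq\emptyset$). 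Two observations set things up: (i) since $t\mapsto\max\{LB^h,t\}$ is nondecreasing, every coefficient $\max\{LB^h,d_{ij}\}-\max\{LB^h,d_{ij'}\}$ in \eqref{eq:nPCY-OPT} is nonnegative; and (ii) $\bar z^h\ge m_i$, because in \eqref{eq:nPCY-zPush} for the pair $(i,j^{*})$ no $j'$ with $d_{ij'}<d_{ij^{*}}$ can be open, so the sum vanishes and the constraint reads $\bar z^h\ge d_{ij^{*}}=m_i$. Also $\bar z^h\ge LB^h$ by the defining property of $LB^h$, exactly as used in the proof of Proposition \ref{prop:xy-lift}; hence $\bar z^h\ge\max\{LB^h,m_i\}=:\tilde m_i$.

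Next I would bound the right-hand side of \eqref{eq:nPCY-OPT} by $\tilde m_i$ through a case split on the position of $d_{ij}$ relative to $d_{ij^{*}}=m_i$. If $d_{ij}\le d_{ij^{*}}$, monotonicity gives $\max\{LB^h,d_{ij}\}\le\tilde m_i$, and dropping the nonnegative sum (observation (i)) the right-hand side is $\le\tilde m_i$. If $d_{ij}>d_{ij^{*}}$, then $j^{*}$ is one of the indices of the sum (since $d_{ij^{*}}<d_{ij}$) and $\bar y_{j^{*}}^h=1$, so keeping only that term and discarding the remaining nonnegative ones bounds the right-hand side by $\max\{LB^h,d_{ij}\}-\bigl(\max\{LB^h,d_{ij}\}-\max\{LB^h,d_{ij^{*}}\}\bigr)=\max\{LB^h,d_{ij^{*}}\}=\tilde m_i$. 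Combining the two cases, the right-hand side of \eqref{eq:nPCY-OPT} is $\le\tilde m_i\le\bar z^h$, which is the claim. Alternatively one can run the contrapositive, mirroring the proof of Proposition \ref{prop:xy-lift}: assume a feasible $(\bar y,\bar z)$ violates \eqref{eq:nPCY-OPT} for some $i,j,h$, single out the unique-by-analogy nearest open facility, and derive a contradiction from the same two cases.

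The step I expect to require the most care is recognizing that \eqref{eq:nPCY-OPT} is genuinely \emph{stronger} than \eqref{eq:nPCY-zPush}, not a mere rewriting of it, so one cannot just invoke \eqref{eq:nPCY-zPush}. Indeed the index set of the sum is still $\{j':d_{ij'}<d_{ij}\}$ in the \emph{original} distances, and for those $j'$ with $d_{ij'}<LB^h$ the lifted coefficient $\max\{LB^h,d_{ij}\}-LB^h$ is strictly smaller than the coefficient $d_{ij}-d_{ij'}$ of \eqref{eq:nPCY-zPush} when $d_{ij}>LB^h$ (and when $d_{ij}\le LB^h$ all coefficients collapse to $0$ and \eqref{eq:nPCY-OPT} degenerates to $z^h\ge LB^h$). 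The strengthening therefore has to be justified directly, which is precisely what the $\tilde m_i$-bound above does. A minor point worth stating, as in the remark following Proposition \ref{prop:xy-lift}, is that $LB^h$ need only bound $z^h$ on optimal solutions, so \eqref{eq:nPCY-OPT} is to be understood as valid in the sense of not cutting off all optimal solutions.
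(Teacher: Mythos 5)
Your proof is correct. The paper itself only says ``similar to the proof of Proposition~\ref{prop:xy-lift}'' for this proposition, so your write-up supplies exactly the detail that reference hides: since \nPCY has no assignment variables, the role played there by the uniquely assigned facility (the $j'$ with $\bar x_{ij'}=1$) must be taken over by a nearest open facility $j^{*}$, and your two observations --- nonnegativity of the lifted coefficients and $\bar z^h\ge\max\{LB^h,m_i\}$ via constraint \eqref{eq:nPCY-zPush} applied at $(i,j^{*})$ --- together with the case split on $d_{ij}\lessgtr d_{ij^{*}}$ give a clean direct bound of the right-hand side by $\max\{LB^h,m_i\}$, rather than the contradiction-style argument of Proposition~\ref{prop:xy-lift}; both routes are equivalent in substance, but yours makes explicit why the lifting (which genuinely shrinks coefficients, as you note) cannot push the right-hand side above $z^h$. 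Two further points you handle appropriately: you implicitly read the summand variable as $y_{j'}^h$ (the index $y_j^h$ in the displayed inequality is a typo, cf.\ \eqref{eq:nPCY-zPush}), and you flag that $\bar z^h\ge LB^h$ only holds in the optimality-cut sense, i.e.\ the inequality is guaranteed not to cut off optimal solutions rather than literally every feasible one --- the same caveat that is implicit in the paper's statement and in the proof of Proposition~\ref{prop:xy-lift}.
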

\begin{proof}\label{proof:y-lift}
    Similar to the proof of Proposition \ref{prop:xy-lift}. $\Box$
\end{proof}

\subsection{Third formulation}\label{sec:u}
The third formulation, denoted as \nPCE, uses the binary variable $y_j^h$ for $\inJ$ and $\inH$ to indicate the open facilities, analogously to the two previous formulations. Furthermore, let $\mathcal D = \{d_{ij}:\inI, \inJ\}$ denote the set of all possible
distances and let $D_1 \leq \ldots \leq D_K$ be the values contained in $\mathcal D$, so $\mathcal D = \{D_1, \dots, D_K\}$. Let $\mathcal K$ be the set of indices in $\mathcal D$. 
For a $k \in \mathcal K$ and $\inH$, the binary variable $u_k^h$ indicates if the objective function value in time period $h$ (measured by continuous variable $z^h$) is greater or equal to $D_k$. For customer $\inI$ let the set $S_i$ be the set
of indices $k \in \mathcal K$ for which there exists a facility $\inJ$ with $d_{ij}=D_k$. With this notation and variables, the \npCPA can be formulated as follows.

\begin{subequations}\label{eq:nPCE}
    \begin{alignat}{4}
         & \nPCE\  & \min        &  & \sum_{h \in \mathcal H} z^h                 &                   & \label{eq:nPCE-objective}                                                                        \\
         &         & \text{s.t.} &  & \sum_{j \in \mathcal J} y_{j}^h             & = p^h             & \forall \inH                                                            \label{eq:nPCE-numOfFac} \\
         &         & D_0 +       &  & \sum_{k=1}^{K}(D_k-D_{k-1})u^{h}_k & \leq z^h          & \forall \inH                                                            \label{eq:nPCE-zPush}    \\
         &         &             &  & u^{h}_k+\sum_{j:d_{ij}<D_k}y_{j}^h & \geq 1            & \forall i \in \mathcal{I}, \forall \inH, \forall k \in S_i \cup \{K\}   \label{eq:nPCE-uPush}    \\
         &         &             &  & u^{h}_k                            & \geq u_{k+1}^h    & \forall \inH, \forall k \in \mathcal{K} \setminus \{K\}                 \label{eq:nPCE-uNest}    \\
         &         &             &  & y_{j}^h                            & \leq y_{j}^{h-1}  & \forall \inH, \forall j \in \mathcal{J}                                 \label{eq:nPCE-nested}   \\
         &         &             &  & y_{j}^h                            & \in \{0, 1\}\quad & \forall \inH, \forall j \in \mathcal{J}                                 \label{eq:nPCE-biny}     \\
         &         &             &  & u^{h}_k                            & \in \{0, 1\}      & \forall h \in \mathcal{H}, \forall k \in \mathcal{K}                    \label{eq:nPCE-binU}     \\
         &         &             &  & z^h                                & \in \mathbb{R}    & \forall \inH                                                            \label{eq:nPCE-nonnegZ}
    \end{alignat}
\end{subequations}

The objective function \eqref{eq:nPCE-objective} minimizes the sum over the distances $z^h$ over all time periods. The correct value of the $z^h$-variables is ensured by constraints \eqref{eq:nPCE-zPush}. The constraints
\eqref{eq:nPCE-numOfFac} ensure that no more than $p^h$ facilities are opened in each time period. Constraint \eqref{eq:nPCE-uPush} is ensuring that if for any customer $i$ in time period $h$ no facility $j$ with smaller distance
than $D_k$ is opened $u_k^h$ has to be one. Since \eqref{eq:nPCE-uPush} is not defined for all $k \in \mathcal{K}$ but only for the subsets based on $S_i \cup \{K\}$, constraints \eqref{eq:nPCE-uNest} are necessary to ensure
that no $u_k^h$ can equal zero if $u_{k+1}^h$ is one (otherwise constraints \eqref{eq:nPCE-zPush} would not measure the distance correctly). The inequalities \eqref{eq:nPCE-nested} are for nesting and are the same as
(\ref{eq:nPC-nested},~\ref{eq:nPCY-nested}). The remaining constraints are the binary and non-negativity constraints, respectively. This formulation has
$\mathcal{O} \left( \left( \left| \mathcal{I} \right| + \left| \mathcal{K} \right| \right) \left| \mathcal{H} \right| \right)$ variables and
$\mathcal{O}\left( \min \left( \left| \mathcal{I} \right| \left| \mathcal J \right|, \left| \mathcal{I} \right| \left| \mathcal K \right| \right) \left| \mathcal{H} \right| \right)$ constraints.

Given upper bounds $UB^h$ and lower bounds $LB^h$ on $z^h$, some variables in the formulation can potentially be fixed to zero or one, and as a result of this fixing, some constraints can potentially be removed. 

\begin{proposition}\label{prop:npce-fixed}
    Let $UB^h$ be an upper bound on the value the decision variable $z^h$ can take in any optimal solution. The variables $u_k^h$ for every $k$ with $D_k > UB^h$ can then be set to zero (or removed from the formulation).
   Let $LB^h$ be a lower bound on the value of decision variable $z^h$ in any optimal solution. Then the variables $u_k^h$ for every $k$ with $D_k < LB^h$ can be set to one. As a consequence of these variable fixings, the constraints \ref{eq:nPCE-uNest} and ~\ref{eq:nPCE-uPush} can be removed for all $k$ with $LB^h < D_k < UB^h$. Moreover, the variables $u_k^h$ for every $k$ with $D_k < LB^h$ can also be removed (instead of fixed to zero) if constraints \eqref{eq:nPCE-zPush} are adapted as follows: Let $k'$ be the index of the smallest distinct distance $D_k$ for 
    which $D_k \geq LB^h$. The adapted constraints are
    \begin{align}
        D_{k'} + \sum_{k = k'}^{K} \left(D_k - D_{k-1}\right) u_k^h \leq z^H && \forall \inH
    \end{align}
    Naturally, we can also replace $K$ in this sum by the index of the largest distinct distance $D_k$ for which $D_k \leq UB^h$.
\end{proposition}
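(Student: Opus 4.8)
The plan is to prove the several assertions in turn --- the two variable fixings, the resulting constraint elimination, and the reformulation of \eqref{eq:nPCE-zPush} that lets the fixed-to-one variables be dropped altogether --- the common ingredient being the structure forced by \eqref{eq:nPCE-uNest}: for every $\inH$ the vector $(u_1^h,\dots,u_K^h)$ is non-increasing in $k$, hence a ``staircase'' $(1,\dots,1,0,\dots,0)$; write $m^h\in\{0,\dots,K\}$ for the number of leading ones. Since $D_0\le D_1\le\dots\le D_K$, the telescoping identity $\sum_{k=1}^{m^h}(D_k-D_{k-1})=D_{m^h}-D_0$ turns constraint \eqref{eq:nPCE-zPush} into $z^h\ge D_{m^h}$, while $z^h=D_{m^h}$ is always feasible for \eqref{eq:nPCE-zPush} (no other constraint bounds $z^h$ from below). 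For the \emph{upper-bound fixing}: if some optimal solution had $u_k^h=1$ for an index with $D_k>UB^h$, then $m^h\ge k$, so $z^h\ge D_{m^h}\ge D_k>UB^h$, contradicting that $UB^h$ bounds $z^h$ from above in optimal solutions; hence $u_k^h=0$ whenever $D_k>UB^h$, and these variables may be removed.

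For the \emph{lower-bound fixing}, fix $\inH$ and suppose some optimal solution had $u_{k_0}^h=0$ with $D_{k_0}<LB^h$. Then the staircase property gives $m^h\le k_0-1$, so \eqref{eq:nPCE-zPush} only forces $z^h\ge D_{m^h}$ with $D_{m^h}\le D_{k_0-1}<LB^h$; on the other hand $z^h\ge LB^h$ by definition of $LB^h$, so $z^h>D_{m^h}$, and replacing $z^h$ by $D_{m^h}$ yields a feasible solution of strictly smaller objective --- a contradiction. Hence $u_k^h=1$ whenever $D_k<LB^h$, so these variables may be set to one.

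With both fixings in place I would check, index by index, that the instances of \eqref{eq:nPCE-uNest} and \eqref{eq:nPCE-uPush} attached to a fixed variable become implied: for $u_k^h$ fixed to one they read $1\ge u_{k+1}^h$ and $1+(\cdot)\ge1$; for $u_k^h$ fixed to zero the neighbouring entry in \eqref{eq:nPCE-uNest} is zero as well, while \eqref{eq:nPCE-uPush} collapses to a covering inequality $\sum_{j:d_{ij}<D_k}y_j^h\ge1$ of which, for each customer, only the one with the smallest distance exceeding $UB^h$ needs to be kept (the larger ones follow by nestedness of the sets $\{j:d_{ij}<D_k\}$). Finally, to drop --- rather than merely fix --- the variables $u_k^h$ with $D_k<LB^h$, I substitute their value $1$ into \eqref{eq:nPCE-zPush}: the deleted terms contribute the constant $\sum_{k:D_k<LB^h}(D_k-D_{k-1})$, which telescopes, so folding it into the constant term and starting the sum at the first surviving index $k'$ produces the stated adapted inequality; truncating that sum from above at the largest distance not exceeding $UB^h$ is immediate, since the omitted terms carry $u_k^h=0$.

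The step I expect to need the most care is the constraint elimination at the upper end: one has to track the boundary indices adjacent to the fixed blocks precisely and, crucially, note that the instances of \eqref{eq:nPCE-uPush} with $D_k>UB^h$ do not all vanish --- after fixing $u_k^h=0$ they become covering inequalities, and for each customer one of them must be retained for the formulation to stay exact. Everything else (the telescoping bookkeeping for the reformulated \eqref{eq:nPCE-zPush}, and the routine redundancy checks for the fixed-to-one block) is straightforward.
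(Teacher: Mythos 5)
Your proof is correct and, unlike the paper's, actually contains an argument: the paper disposes of this proposition with the single sentence that it ``directly follows from the definitions of the variables and constraints.'' Your reading of \eqref{eq:nPCE-uNest} as forcing a staircase, the telescoping of \eqref{eq:nPCE-zPush} to $z^h\ge D_{m^h}$, the optimality-based arguments for both fixings (for the lower-bound fixing one indeed needs, as you use, that $z^h$ sits at equality in \eqref{eq:nPCE-zPush} in any optimal solution because $z^h$ appears nowhere else), and the redundancy checks for the rows attached to variables fixed to one are all sound. Your warning that the instances of \eqref{eq:nPCE-uPush} with $D_k>UB^h$ do not become redundant but turn into covering inequalities, of which the tightest one per customer must be retained for exactness, is a genuinely important point that the paper's one-line proof glosses over; it also shows that the removal range ``$LB^h<D_k<UB^h$'' in the statement cannot be meant literally (it is presumably the complement that is intended, and even then only the rows attached to variables fixed to one, plus the \eqref{eq:nPCE-uNest} rows whose right-hand variable is fixed to zero, are truly redundant).

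One mismatch at the very end deserves attention: direct substitution of $u_k^h=1$ for all $k$ with $D_k<LB^h$ into \eqref{eq:nPCE-zPush} telescopes to the constant $D_{k'-1}$, i.e.\ to $D_{k'-1}+\sum_{k=k'}^{K}(D_k-D_{k-1})u_k^h\le z^h$, not to the displayed constraint with constant $D_{k'}$ and the sum starting at $k'$; the latter double-counts the increment $D_{k'}-D_{k'-1}$ whenever $u_{k'}^h=1$, which happens in every optimal solution since $z^h\ge LB^h$ forces $z^h\ge D_{k'}$. The displayed form is valid only if the sum starts at $k'+1$ (equivalently, one may add the strengthening $z^h\ge D_{k'}$, which is valid at optimality). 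So rather than asserting that your telescoping ``produces the stated adapted inequality,'' you should flag this off-by-one in the statement (along with the stray $z^H$); your derivation, not the displayed formula, is the correct one.
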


\begin{proof}
    Directly follows from the definitions of the variables and constraints. $\Box$
\end{proof}

\section{Implementation details} \label{sec:algorithm}
In this section we give implementation details of the exact solution algorithms we designed based on the formulations presented in the previous section. Depending on the underlying formulation, the solution algorithms are branch-and-bound (formulations \nPC and \nPCE) or branch-and-cut (formulation \nPCY) algorithms. We used C++ with CPLEX 22.1 as a framework to implement the algorithms.

We first describe a preprocessing algorithm and a starting heuristic. Both of these procedures can be used in combination with any of our solution algorithms. At the end of the section, we discuss the implementation details which are related to
the implementation of the B\&C used in conjunction with formulation \nPCY, i.e., details of the separation routine for inequalities \eqref{eq:nPCY-zPush}/\eqref{eq:nPCY-OPT}. In our computational study in Section \ref{sec:computational}, we provide results with various settings regarding turning on/off the ingredients described in this section.

\subsection{Preprocessing}\label{sec:preprocecssing}
In the propositions in Section \ref{sec:milp-formulations} we showed that good bounds on the value of the decision variables $z$ can improve the formulations. We developed thus a preprocessing phase to calculate good bounds. We obtain lower bounds on the $z^h$ by solving the \pCP with $p=p^h$, as the optimal objective function value of the \pCP is a valid lower bound on the $z^h$ variable.  
\begin{proposition} \label{prop:p-center}
    Let $d^{h*}$ be the optimal objective function value of \pCP\ for a certain $p^h$. Then $d^{h*}$ is a valid lower bound for the value of the decision variable $z^h$ of \npCPA.
\end{proposition}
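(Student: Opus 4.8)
The plan is to reduce the statement to the elementary observation that, in any feasible solution of the \npCPA, the set $\mathcal{J}^h$ of facilities open in period $h$ is a feasible---though not necessarily optimal---solution of the (classical) \pCP\ with $p=p^h$.

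First I would recall from Definition~\ref{def:npCP} that a feasible solution $(\mathcal{J}^1,\ldots,\mathcal{J}^H)$ of the \npCP\ satisfies $\mathcal{J}^h\subseteq\mathcal{J}$ and $|\mathcal{J}^h|=p^h$ for every $\inH$; crucially, the nesting constraint only further restricts the family of admissible tuples, and since it does not touch the cardinality of any $\mathcal{J}^h$ it plays no role here. Hence $\mathcal{J}^h$ lies in the feasible region $\{S\subseteq\mathcal{J}:|S|=p^h\}$ of the \pCP\ with $p=p^h$, so by the definition of $d^{h*}$ as the optimal value of that problem we get $d(\mathcal{J}^h)=\max_{i\in\mathcal{I}}\min_{j\in\mathcal{J}^h}d_{ij}\geq d^{h*}$.

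Second I would connect $d(\mathcal{J}^h)$ to the decision variable $z^h$ in the MILP model. In each of our formulations the constraints force $z^h\geq\min_{j\in\mathcal{J}^h}d_{ij}$ for every $\inI$: for instance in \nPC\ this follows from \eqref{eq:nPC-oneToOne}, \eqref{eq:nPC-onlyOpen}, \eqref{eq:nPC-binX} and \eqref{eq:nPC-zPush}, since the unique facility to which $i$ is assigned is open and therefore belongs to $\mathcal{J}^h$, and \eqref{eq:nPC-zPush} then yields the bound; the analogous argument works for \nPCY\ and \nPCE. Taking the maximum over $\inI$ gives $z^h\geq d(\mathcal{J}^h)$, and combining this with the first step gives $z^h\geq d^{h*}$ for every feasible solution, in particular for every optimal one.

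There is essentially no hard step here; the only points requiring a little care are the quantifier and the role of nesting. On the quantifier: the bound should be stated for \emph{every} feasible solution (hence for any optimal solution, as needed by Propositions~\ref{prop:xy-lift}, \ref{prop:y-lift} and \ref{prop:npce-fixed}), not merely for solutions optimal for the \npCPA. On nesting: one should point out explicitly that the nesting constraint, which is exactly what distinguishes the \npCP\ from independently solving $H$ separate \pCP\ instances, leaves $|\mathcal{J}^h|=p^h$ intact and therefore does not weaken the argument.
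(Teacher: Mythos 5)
Your proposal is correct and follows essentially the same route as the paper, which simply observes that the \pCP\ with $p=p^h$ is a relaxation of the \npCPA\ (since any feasible tuple restricted to period $h$ is a feasible \pCP\ solution of cardinality $p^h$), so $d^{h*}$ bounds $z^h$ from below. Your version merely spells out this relaxation argument in more detail by verifying through the MILP constraints that $z^h\geq d(\mathcal{J}^h)\geq d^{h*}$ for every feasible solution, which is a harmless elaboration rather than a different approach.
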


\begin{proof}
    As the \npCPA for $|\mathcal{H}| = 1$ reduces to the \pCP, the \pCP is a relaxation of the \npCP.
    Therefore, the optimal objective value $d^{h*}$ of the \pCP for $p = p^h$ is a lower bound on the decision variable $z^h$ of \npCPA. $\Box$
\end{proof}

In our preprocessing algorithm we solve the \pCP for $p=p^h$ for all $\inH$ to obtain lower bounds for all $z^h$. We solve the \pCP by a modified version of the B\&C algorithm of \cite{GAAR2022} where we exploit that this algorithm also uses valid inequalities based on a lower bound on the objective function value: By solving the problems in a decreasing fashion according to $p^h$, we can always plug in the optimal value of the previous \pCP as a lower bound in the inequalities as the optimal solution value of the \pCP for a $p^{h'}>p^h$ is a lower bound for the optimal solution value of the \pCP for $p^h$. Thus, as the results in Section \ref{sec:settings} show, although we repeatedly solve an NP-hard problem, we obtain a computationally very efficient preprocessing algorithm. The obtained lower bounds on the values $z^h$ are then used according to Propositions \ref{prop:xy-lift}, \ref{prop:y-lift}, \ref{prop:npce-fixed}.

For our solution algorithms based on formulation \nPCE we also calculate upper bounds on $z^h$ in this preprocessing phase, as with this formulation upper bounds can also be exploited (see Proposition \ref{prop:npce-fixed}).

\begin{proposition}\label{prop:UB1}
    Given a an upper bound $UB$ on the objective function value of \npCPA and lower bounds $LB^h$ on the decision variables ${z}^h$, then
  \begin{equation}
        UB^h = UB - \sum_{h' \in \mathcal H: h' \neq h}LB^{h'}, \label{eq:UB}
    \end{equation}
    is a valid upper bound on the decision variable $z^h$ of the \npCPA for $\inH$.     
\end{proposition}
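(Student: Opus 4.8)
The plan is to argue directly from the form of the objective function. By Observation~\ref{obs:ignore} the \npCPA\ minimizes $\sum_{h \in \mathcal H} z^h$, so the first step is to fix an arbitrary optimal solution and let $z^1,\dots,z^H$ denote the values its decision variables take. Since $UB$ is an upper bound on the (optimal) objective value, this solution satisfies $\sum_{h' \in \mathcal H} z^{h'} \le UB$. That single inequality, together with the lower bounds, is all we need.

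Next I would isolate the term for the period of interest. For the fixed $h \in \mathcal H$,
\[
 z^h \;=\; \sum_{h' \in \mathcal H} z^{h'} \;-\; \sum_{h' \in \mathcal H :\, h' \neq h} z^{h'} \;\le\; UB \;-\; \sum_{h' \in \mathcal H :\, h' \neq h} z^{h'}.
\]
Now I use that each $LB^{h'}$ is, by hypothesis, a valid lower bound on $z^{h'}$ in any optimal solution, hence in the one we fixed, so $z^{h'} \ge LB^{h'}$ for every $h' \neq h$ and therefore $-\sum_{h' \neq h} z^{h'} \le -\sum_{h' \neq h} LB^{h'}$. Substituting gives $z^h \le UB - \sum_{h' \in \mathcal H :\, h' \neq h} LB^{h'} = UB^h$, which is exactly the claim.

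There is essentially no technical obstacle; the only point requiring care is quantification. The statement must be read as bounding the value of $z^h$ in any optimal solution -- mirroring how the lower bounds $LB^h$ were introduced for Propositions~\ref{prop:xy-lift}, \ref{prop:y-lift} and \ref{prop:npce-fixed} -- because in a merely feasible solution $z^h$ is only constrained from below by constraints~\eqref{eq:nPCE-zPush} and could take a larger value, so it need not satisfy $z^h \le UB^h$. One should also note that $UB$ is assumed to bound the optimal value, so that the fixed optimal solution really does satisfy $\sum_{h'} z^{h'} \le UB$; if instead $UB$ only bounds the objective of a particular incumbent feasible solution, the identical argument applies to that incumbent. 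Finally, since this $UB^h$ feeds into the variable fixing of Proposition~\ref{prop:npce-fixed}, it is worth remarking that $UB^h$ is monotone in the quality of its inputs: a smaller $UB$ or larger $LB^{h'}$ yields a smaller, hence stronger, $UB^h$.
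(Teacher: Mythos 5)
Your proof is correct and follows essentially the same route as the paper: fix an optimal solution, write $z^{h} = \sum_{h'} z^{h'} - \sum_{h' \neq h} z^{h'}$, then bound the total from above by $UB$ and the other periods from below by the $LB^{h'}$. The additional remarks on quantification and monotonicity are fine but not needed for validity.
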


\begin{proof}
Let $z^*=\sum_{\inH} z^{h*}$ be the optimal objective function value of the considered instance, where $z^{h*}=d(\mathcal J^{h*})$ for a given optimal solution $(\mathcal J^{1*},\ldots,\mathcal J^{H*})$. Rewriting this equation gives $z^{h*}=z^*-\sum_{h' \in \mathcal H: h' \neq h} z^{h'*}$. The proposition follows from the fact that $UB\geq z^{*}$ and $LB^h\leq z^{h*}\leq UB^h$, $\inH$ by definition of the bounds. $\Box$
\end{proof}

There is also another way to calculate the upper bounds for the $z^h$-variables, as the next proposition shows.

\begin{proposition}\label{prop:UB2}
    Given an upper bound $UB$ on the objective function value of \npCPA and lower bounds $LB^h$ on the decision variables ${z}^h$, then
    \begin{equation}
        UB^h = \frac{UB - \sum_{h' = h+1}^{H}LB^{h'}}{h}, \label{eq:UB2}
    \end{equation}
    is a valid upper bound on the decision variable $z^h$ of the \npCPA for $\inH$. 
\end{proposition}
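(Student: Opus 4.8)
The plan is to combine the inequality $UB \ge z^* = \sum_{h' \in \mathcal H} z^{h'*}$ coming from the definition of $UB$ with a monotonicity property of the distances $d(\mathcal J^h)$ that is forced by the nesting constraint. First I would observe that for any feasible solution $(\mathcal J^1,\ldots,\mathcal J^H)$ of the \npCP, the nesting constraint $\mathcal J^h \subseteq \mathcal J^{h+1}$ implies that for every customer $\inI$ we have $\min_{j \in \mathcal J^{h+1}} d_{ij} \le \min_{j \in \mathcal J^h} d_{ij}$, since on the left the minimum is taken over a superset of the index set on the right; taking the maximum over $\inI$ then yields $d(\mathcal J^{h+1}) \le d(\mathcal J^h)$. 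Hence along any feasible (in particular, any optimal) solution, the values $z^{h*} = d(\mathcal J^{h*})$ are non-increasing in $h$, i.e., $z^{1*} \ge z^{2*} \ge \cdots \ge z^{H*}$.

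Second, I would fix an optimal solution of the \npCPA with objective value $z^* = \sum_{h' \in \mathcal H} z^{h'*} \le UB$, and split the sum at index $h$: $z^* = \sum_{h'=1}^{h} z^{h'*} + \sum_{h'=h+1}^{H} z^{h'*}$. Using the monotonicity established in the first step, each of the $h$ terms in the first partial sum is at least $z^{h*}$, so $\sum_{h'=1}^{h} z^{h'*} \ge h\, z^{h*}$. For the second partial sum, the definition of the lower bounds gives $z^{h'*} \ge LB^{h'}$, hence $\sum_{h'=h+1}^{H} z^{h'*} \ge \sum_{h'=h+1}^{H} LB^{h'}$. Combining the two estimates, $UB \ge z^* \ge h\, z^{h*} + \sum_{h'=h+1}^{H} LB^{h'}$, and dividing by $h$ after rearranging yields $z^{h*} \le \bigl(UB - \sum_{h'=h+1}^{H} LB^{h'}\bigr)/h = UB^h$, which is exactly the claimed bound.

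The only subtle point — and really the crux of the argument — is the monotonicity $z^{1*} \ge \cdots \ge z^{H*}$; everything else is elementary arithmetic. I do not expect a genuine obstacle there, as it is an immediate consequence of the inclusion $\mathcal J^h \subseteq \mathcal J^{h+1}$ together with the fact that $d(\cdot)$ is a max-min over nonnegative distances and is therefore antitone with respect to set inclusion of the facility set. One should just be careful, as in Propositions \ref{prop:xy-lift}--\ref{prop:UB1}, that the statement is about the value of $z^h$ in \emph{any} optimal solution, since distinct optimal solutions may realize different values of the same $z^h$; the argument above applies verbatim to each of them, so the bound $UB^h$ is valid uniformly.
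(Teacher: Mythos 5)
Your proof is correct and follows essentially the same route as the paper: the key ingredient in both is the monotonicity $z^{1*}\ge\cdots\ge z^{H*}$ forced by the nesting constraint, combined with splitting the sum $\sum_{h'} z^{h'*}\le UB$ at index $h$, bounding the tail by the $LB^{h'}$ and the head by $h\,z^{h*}$, and averaging. You merely phrase it more directly on the optimal values $z^{h'*}$ instead of the paper's intermediate decomposition $UB=\sum_{h'} UB^{h'}$, which is an equivalent bookkeeping device.
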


\begin{proof}
By the definition of the \npCPA we have that, given any upper bound $UB$, there must exist some $UB^h\geq LB^h$ with $UB=\sum_{h=1}^H UB^h$. Thus, for any time period $h \in \mathcal H$, we have $\sum_{h'=1}^{h}UB^{h'} \leq UB - \sum_{h'={h}+1}^{H}LB^{h'}$. Dividing both sides by $h$ gives us $(\sum_{h'=1}^{h}UB^{h'})/h \leq (UB - \sum_{h'={h}+1}^{H}LB^{h'})/h$. Moreover, as we must have $z^1 \geq \dots \geq z^h$ due to the definition of the \npCPA, it follows that $h z^h \leq \sum_{h'=1}^h z^{h'}$ and thus $z^h \leq (\sum_{h'=1}^{h}UB^{h'})/h$. $\Box$
\end{proof}

\begin{observation}
    Note that given any feasible solution, say $(\bar{\mathcal J^{1}},\ldots,\bar{\mathcal J^{H}})$, we have $UB=\sum_{\inH} d(\bar{\mathcal J^{h}})$. However, it is not true that $UB^h=d(\bar{\mathcal J^{h}})$ is valid. 
    For example, consider $\mathcal I=\{A,B\}$, $\mathcal J=\{a,b,c\}$ with $d_{Aa}=d_{Bb}=0$, $d_{Ab}=d_{Ba}=20$, $d_{Ac}=d_{Bc}=15$. Then a feasible solution is $(\{c\},\{c,a\})$ with $d(\{c\})=15$ and  $d(\{c,a\})=15$ (resulting in an objective function value of 30). Setting $UB^1=d(\{c\})=15$ is not valid, since there exist two optimal solutions for this instance, namely $(\{a\},\{a,b\})$ with $d(\{a\})=20$ and $d(\{a,b\})=0$ (resulting in an objective function value of 20) and the solution where $a$ and $b$ are switched, where we then have $\{b\}$ with $d(\{b\})=20$ as first component. Thus, setting $UB^1=d(\{c\})=15$ makes all optimal solutions infeasible. Note that this does not preclude that, given any $UB$ there must exist some $UB^h$ with $UB=\sum_{h=1}^H UB^h$ (as used in the proof of Proposition \ref{prop:UB2}), for the given feasible solution with value 30 in the example we could, e.g., set $UB^1=20$ and $UB^2=10$.
\end{observation}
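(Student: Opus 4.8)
The statement is a negative (non-)validity claim, so the natural strategy is to produce one explicit counterexample and verify that it behaves as required. First I would pin down the meaning of ``$UB^h$ is a valid upper bound on $z^h$'' as it is used in Propositions \ref{prop:UB1} and \ref{prop:npce-fixed}: imposing $z^h \le UB^h$ must not cut off \emph{every} optimal solution, i.e.\ there has to be at least one optimal solution $(\mathcal J^{1*},\dots,\mathcal J^{H*})$ with $d(\mathcal J^{h*}) \le UB^h$. Hence, to refute ``$UB^h = d(\bar{\mathcal J^h})$ is valid'' it suffices to exhibit a single instance of the \npCPA\ together with one feasible reference solution $(\bar{\mathcal J^1},\dots,\bar{\mathcal J^H})$ (which defines $UB=\sum_{\inH} d(\bar{\mathcal J^h})$) for which \emph{every} optimal solution has strictly larger distance in some fixed period than the reference solution does.

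Second, I would argue informally why such an instance must exist, to guide the search. The \npCPA\ objective is a sum of per-period distances, and, unlike the classical $p$-center objective, it is not tied in any monotone way to a given candidate solution period-by-period: an optimal solution may ``front-load'' distance, i.e.\ tolerate a larger $d(\mathcal J^1)$ in exchange for a much smaller $d(\mathcal J^h)$ later (in fact $0$ once enough facilities may be opened), while a suboptimal but ``balanced'' reference solution keeps each $d(\bar{\mathcal J^h})$ small yet has a worse total. Thus $d(\bar{\mathcal J^1})$ need not bound the first-period distance of any optimal solution.

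Third, I would write down the smallest such instance: $\mathcal I=\{A,B\}$, $\mathcal J=\{a,b,c\}$, $H=2$, $\mathcal P=\{1,2\}$, with $d_{Aa}=d_{Bb}=0$, $d_{Ab}=d_{Ba}=20$, $d_{Ac}=d_{Bc}=15$. Then I would (i) check $(\{c\},\{c,a\})$ is feasible (cardinalities $1,2$; nesting $\{c\}\subseteq\{c,a\}$) and compute $d(\{c\})=15$, $d(\{c,a\})=15$, so this reference solution gives $UB=30$ and candidate $UB^1=d(\bar{\mathcal J^1})=15$; and (ii) determine the optimum by a short enumeration over the three possible first-period singletons: with first period $\{c\}$ the total is $15+15=30$, whereas $(\{a\},\{a,b\})$ and the symmetric $(\{b\},\{a,b\})$ give $20+0=20$, which is optimal. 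Since both optimal solutions have first-period distance $20>15=UB^1$, the candidate bound is violated by every optimal solution, which proves the claim. I would close by remarking that this is consistent with Proposition \ref{prop:UB2}, whose proof only invokes the existence of \emph{some} split $UB=\sum_{\inH}UB^h$ with $UB^h\ge LB^h$ (here, e.g., $UB^1=20$, $UB^2=10$), not the particular choice $UB^h=d(\bar{\mathcal J^h})$.

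The main obstacle is the construction step: designing an instance whose \emph{every} optimal solution is forced to put more distance into the target period than a genuinely feasible reference solution does, and certifying optimality by an honest (if tiny) case analysis. The remaining work — the four $\max/\min$ evaluations and the final comparison — is routine.
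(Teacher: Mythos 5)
Your proposal is correct and follows essentially the same route as the paper: the observation's justification is exactly the embedded counterexample, and you verify it the same way, by checking feasibility of $(\{c\},\{c,a\})$, enumerating the first-period singletons to certify that the only optimal solutions are $(\{a\},\{a,b\})$ and its symmetric twin with first-period value $20>15$, and concluding that $UB^1=15$ excludes every optimal solution. Your explicit clarification that ``valid'' means at least one optimal solution must satisfy $z^h\le UB^h$ is a helpful (and faithful) sharpening of what the paper leaves implicit.
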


\begin{proposition}
 The upper bounds $UB^h$ obtained by Propositions \ref{prop:UB1} and \ref{prop:UB2} are incomparable, i.e., given an upper bound $UB$ and lower bounds $LB^h$, $\inH$, either Proposition \ref{prop:UB1} or Proposition \ref{prop:UB2} can result in a better $UB^{h'}$ for a given $h'\in H$.
\end{proposition}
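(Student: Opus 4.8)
The plan is to prove incomparability by exhibiting two configurations of the input data $(UB, LB^1,\dots,LB^H)$: in one of them Proposition~\ref{prop:UB1} produces the strictly smaller (hence better) value $UB^{h'}$, and in the other Proposition~\ref{prop:UB2} does. So the whole argument reduces to a display of two small examples, and the main thing to be careful about is that the chosen bound-tuples are admissible.

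First I would record the elementary fact that for $h=1$ the two formulas coincide: substituting $h=1$ into \eqref{eq:UB} gives $UB-\sum_{h'=2}^{H}LB^{h'}$, and substituting $h=1$ into \eqref{eq:UB2} gives $\bigl(UB-\sum_{h'=2}^{H}LB^{h'}\bigr)/1$, the same number. Hence any discrepancy must occur for some $h\ge 2$, and it suffices to work with $H=2$ and $h'=2$. For $H=2$ and $h'=2$, \eqref{eq:UB} yields $UB^{2}=UB-LB^{1}$ while \eqref{eq:UB2} yields $UB^{2}=UB/2$. Comparing these, Proposition~\ref{prop:UB1} gives the strictly smaller bound exactly when $LB^{1}>UB/2$, and Proposition~\ref{prop:UB2} gives the strictly smaller bound exactly when $LB^{1}<UB/2$.

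I would then pick concrete legitimate data for each regime, e.g.\ $UB=10$, $LB^{1}=6$, $LB^{2}=1$ for the first (where $UB^{2}$ equals $4$ by Proposition~\ref{prop:UB1} versus $5$ by Proposition~\ref{prop:UB2}), and $UB=10$, $LB^{1}=LB^{2}=1$ for the second (where $UB^{2}$ equals $9$ versus $5$). In each case I would verify admissibility: $0\le LB^{2}\le LB^{1}$ (the second inequality because $p^{1}\le p^{2}$ forces $d^{1*}\ge d^{2*}$, so the lower bounds are non-increasing in $h$) and $LB^{1}+LB^{2}\le UB$ (since the optimal objective is at least $\sum_{h}LB^{h}$ and at most $UB$); both hold here. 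If one prefers the bounds to be realized by genuine \npCPA\ instances rather than treated as formal inputs, one can mimic the tiny two-period construction used in the Observation preceding this proposition to obtain an instance with exactly these values. There is no deep obstacle; the only point requiring attention is this admissibility check together with verifying that the comparisons between the two formulas are strict rather than equalities, which rules out the degenerate choice $LB^{1}=UB/2$.
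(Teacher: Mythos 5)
Your proposal is correct and follows essentially the same strategy as the paper: exhibit one bound configuration where Proposition \ref{prop:UB2} wins and one where Proposition \ref{prop:UB1} wins (the paper uses $LB^h=0$ with arbitrary $UB$ for the former, and $UB=z^*$, $LB^h=z^{h*}$ for an optimal solution with strictly decreasing per-period values for the latter, so validity of the bounds is immediate and no separate admissibility step is needed). The only caveat in your version is that your numeric tuples are treated as formal inputs and strictly still require a realizing \npCPA instance — your necessary checks $LB^2\le LB^1$ and $LB^1+LB^2\le UB$ are not by themselves sufficient — but, as you note, such an instance is easily supplied (e.g., two customers and two facilities with cross-distances chosen so that $d^{1*}=7$ and $d^{2*}=1$ realizes $UB=10$, $LB^1=6$, $LB^2=1$).
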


\begin{proof}
We proceed by first giving an example where Proposition \ref{prop:UB2} gives better upper bounds, and then provide an example where Proposition \ref{prop:UB1} gives better upper bounds.
\begin{itemize}
\item Suppose we are given any $UB$ and $LB^h=0$, $\inH$, clearly the latter is a set of valid lower bounds for any instance of \npCPA. Proposition \ref{prop:UB1} then gives $UB^h=UB$ for all $\inH$, while Proposition \ref{prop:UB2} gives $UB^h=UB/h$. 
\item Suppose we are given $UB=z^*$ and $LB^h=z^{h*}=d(\mathcal J^{h*})$ for a given optimal solution $(\mathcal J^{1*},\ldots,\mathcal J^{H*})$ for the considered instance. This is a valid upper bound and a set of valid lower bounds for any instance of \npCPA. Proposition \ref{prop:UB1} then gives $UB^h=z^{h*}$ for all $\inH$. Moreover, Proposition \ref{prop:UB2} gives $UB^h=(\sum_{h'=1}^h z^{h'*})/h$. Thus for any instance where there exists an optimal solution where for some $h'$ we have $z^{h'-1*}>z^{h'*}$ Proposition \ref{prop:UB2} gives a worse bound. $\Box$
\end{itemize}
\end{proof}

An upper bound $UB$ for use in the above propositions can be easily obtained during the preprocessing phase by using the optimal \pCP solution for $p=p^1$ as the following proposition shows.

\begin{proposition}\label{prop:upperbounds}
For a given instance of the \npCPA with a time horizon $\mathcal{H} = \left\{1, \dots, H\right\}$, let $\mathcal J^1$ be a set with $|\mathcal J^1|=p^1$ and let $z^{1*} = \max_{i \in \mathcal{I}}\min_{j \in \mathcal J^1} d_{ij}$. Then $Hz^{1*}$ is a valid upper bound on the objective function value of the instance.
\end{proposition}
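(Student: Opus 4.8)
The plan is to exhibit an explicit feasible solution of the \npCPA\ whose objective value is at most $Hz^{1*}$; since the optimal value is then bounded by that of any feasible solution, the claim follows. I would build this solution by extending the given set $\mathcal J^1$ period by period: for $h=2,\ldots,H$ choose $\mathcal J^h$ to be any superset of $\mathcal J^{h-1}$ with $|\mathcal J^h|=p^h$. Such a choice exists because $p^{h-1}\le p^h$ (so we only ever need to \emph{add} facilities) and $p^H\le|\mathcal J|$ (so we never run out of locations in $\mathcal J$). By construction $(\mathcal J^1,\ldots,\mathcal J^H)$ satisfies $|\mathcal J^h|=p^h$ for all $\inH$ and the nesting constraint $\mathcal J^h\subseteq\mathcal J^{h+1}$, hence it is feasible for the \npCP\ in the sense of Definition~\ref{def:npCP}.

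Next I would bound each term $d(\mathcal J^h)$. By the nesting we just imposed, $\mathcal J^1\subseteq\mathcal J^h$ for every $\inH$; therefore for each customer $\inI$ the minimum distance to an open facility can only shrink when passing from $\mathcal J^1$ to $\mathcal J^h$, i.e.\ $\min_{j\in\mathcal J^h}d_{ij}\le\min_{j\in\mathcal J^1}d_{ij}$. Taking the maximum over $\inI$ gives $d(\mathcal J^h)=\max_{\inI}\min_{j\in\mathcal J^h}d_{ij}\le\max_{\inI}\min_{j\in\mathcal J^1}d_{ij}=z^{1*}$ for all $\inH$.

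Finally I would sum over the time horizon. Using Observation~\ref{obs:ignore}, the objective value of this feasible solution equals $\sum_{\inH}d(\mathcal J^h)$ (the constants $d^{h*}\ge0$ only decrease the true regret objective $\sum_{\inH}(d(\mathcal J^h)-d^{h*})$, so the bound also holds for that version), and $\sum_{\inH}d(\mathcal J^h)\le\sum_{\inH}z^{1*}=Hz^{1*}$. Since the optimal objective value of the instance is at most the objective value of any feasible solution, $Hz^{1*}$ is a valid upper bound. $\Box$

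\textbf{Expected main obstacle.} There is essentially no hard step: the argument is a direct construction plus a monotonicity observation. The only points requiring a little care are (i) justifying that the period-by-period extension is always possible, which rests exactly on the hypotheses $p^h\le p^{h+1}$ and $p^H\le|\mathcal J|$ from Definition~\ref{def:npCP}, and (ii) being explicit about which ``objective function value'' is meant, which is handled by invoking Observation~\ref{obs:ignore} and the nonnegativity of the $d^{h*}$.
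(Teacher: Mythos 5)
Your proof is correct and follows essentially the same route as the paper: build a feasible solution by padding $\mathcal J^1$ with additional facilities so that $|\mathcal J^h|=p^h$ in each period, note that each $d(\mathcal J^h)\le z^{1*}$, and sum over the horizon. You are merely more explicit than the paper about why the nested extension exists and why monotonicity bounds each term, which is fine but not a different argument.
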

\begin{proof}
Using $\mathcal J^1$ a feasible solution for the instance can be constructed by copying $\mathcal J^1$ to each $\mathcal J^h$, $h=\left\{2, \dots, H\right\}$ and then adding random facilities to these $\mathcal J^h$ until we have $|\mathcal J^h|=p^h$ for each $h$. Clearly the objective function value of this solution is at most $Hz^{1*}$.
$\Box$
\end{proof}

\subsection{Starting heuristics}\label{sec:startHeuristic}

We implemented three starting heuristics to obtain an incumbent solution to initialize our solution algorithms. The first heuristic is based on solving the \pCP for $p = p^H$ (which we do anyway in case the selected algorithmic setting also includes the preprocessing phase described in the previous section). We then use this solution as set $\mathcal J^H$. To construct the remaining solution, we proceed in a greedy fashion by iteratively removing $\delta_h=p^h - p^{h-1}$ facilities to obtain the set $\mathcal J^{h-1}$ from $\mathcal J^{h}$. It is easy to see that a solution constructed in this way is feasible. To select the $\delta_h$ facilities to remove, we enumerate all potential closings of size $\delta_h$ and take the one which gives the smallest increase in objective function value (i.e., the one where we obtain the $\mathcal J^{h-1}$ with smallest $\mathcal {R_A} (h-1,\mathcal J^{h-1})$), ties are broken by the least number of customers at distance of the objective value to the respective facility.

The second starting heuristic uses the optimal solutions of the \pCP for all $p \in \mathcal{P} = \left\{p^1, \dots p^H\right\}$. We count in how many solutions a facility is open. Then we take the $p^1$ facilities with the highest count to construct $\mathcal J^1$. To construct the further sets $\mathcal J^{h}$, $h=\{2,\ldots,H\}$ we proceed in a greedy fashion: Let $\mathcal J'$ be the set of currently open facilities (thus initially $\mathcal J'=\mathcal J^1$) and $\bar {\mathcal J}$ the set of facilities which occur in at least one of the optimal solutions of the \pCP. We add a facility $j^* \in \arg\min_{j' \in \bar {\mathcal J} \setminus \mathcal J'} \max_{i \in \mathcal I} \min_{j \in \mathcal J' \cup j'} d_{ij}$. Whenever $|\mathcal J'|=p^h$ for some $h=\{2,\ldots,H\}$ we set $\mathcal J^{h}=\mathcal J'$. 

The third heuristic is based on solving the \pCP for $p = p^1$. We then use this solution as set $\mathcal J^1$. To construct the remaining solution we proceed in a greedy fashion by iteratively adding $\delta_h=p^{h} - p^{h-1}$ facilities to obtain the set $\mathcal J^{h+1}$ from $\mathcal J^{h}$. It is easy to see that a solution constructed in this way is feasible. To select the $\delta_h$ facilities to add, we enumerate all potential openings of size $\delta_h$ and take the one which gives the largest decrease in objective function value (i.e., the one where we obtain the $\mathcal J^{h+1}$ with smallest $\mathcal {R_A} (h+1,\mathcal J^{h+1})$), ties are broken by the least number of customers at distance of the objective value to the respective facility.

\subsection{Implementation details of formulation \texorpdfstring{\nPCY}{\textit{(nPCA2)}}}

Similar to the first and third formulation, the second formulation \nPCY is a compact formulation and we could thus solve it with a branch-and-bound algorithm. However, following \cite{GAAR2022} (the work the formulation \nPCY is based on) we implemented a branch-and-cut algorithm where we start out with just the cardinality constraints \eqref{eq:nPCY-numOfFac} and nesting constraints \eqref{eq:nPCY-nested} and separate the (strengthened) inequalities \eqref{eq:nPCY-zPush}/\eqref{eq:nPCY-OPT}
 on-the-fly when they are violated. We have implemented this using the \texttt{UserCutCallback} and the \texttt{LazyConstraintCallback} of CPLEX, and use different separation schemes depending on the type of callback. This is done as the \texttt{UserCutCallback} is called when the solution of LP-relaxation at a node in the branch-and-bound tree is fractional, while the \texttt{LazyConstraintCallback} is called whenever a potential incumbent solution is encountered during the solution process. Next to solutions of the current LP-relaxation which fulfill all the integrality requirements this also includes solutions constructed by the internal heuristics of CPLEX. Such heuristic solutions are often quite different to the LP-relaxation solutions, thus the violated inequalities added for these solutions are not useful for improving the LP-relaxation value, however, they must be added to ensure correctness of the algorithm (e.g., otherwise a solution constructed by an internal CPLEX heuristics which sets the values of the $z$-variables wrongly could be accepted as feasible solution). In both cases, we consider all $h=\{1,\ldots,H\}$ independently, i.e., we call the separation scheme for each $\inH$.

 \paragraph{Separation routine implemented in the \texttt{LazyConstraintCallback}}
Let $(z^*,y^*)$ be the solution for which the separation routine is called. Since we are in the \texttt{LazyConstraintCallback} we know the integrality requirements are fulfilled by the solution as well as the cardinality constraints \eqref{eq:nPCY-numOfFac} and the nesting constraints \eqref{eq:nPCY-nested} (as they are kept in the initial relaxation), i.e., the values of the $y$-variables encode a feasible solution, but the values of the $z$-variables may measure the objective function value wrongly due to missing constraints \eqref{eq:nPCY-zPush}/\eqref{eq:nPCY-OPT}. Thus, for each $\inH$ we add the most violated constraint \eqref{eq:nPCY-zPush}/\eqref{eq:nPCY-OPT} (if any). This ensures that the values of the $z$-variables are correct. These most violated constraints can be easily found by inspection as $y^*$ encodes the set of open facilities.

\paragraph{Separation routine implemented in the \texttt{UserCutCallback}}

Our separation scheme is based on the \texttt{fixedCustomer} scheme of \cite{GAAR2022} (which in \cite{GAAR2022} performed better compared to a straightforward scheme of just adding the most violated constraint \eqref{eq:nPCY-zPush}/\eqref{eq:nPCY-OPT}, if there is any, for each customer in each separation round). It initially restricts the separation to a subset of customers $\hat{\mathcal I}$, which is
iteratively grown during the solution process. At each separation round, for all customers in $\hat{\mathcal I}$, the most violated
constraint \eqref{eq:nPCY-zPush}/\eqref{eq:nPCY-OPT}
(if any) is added. We describe at the end of the paragraph how this constraint can be found efficiently. The set $\hat{\mathcal{I}}$ is initialized using a greedy algorithm. A random customer is picked and added to the empty set $\hat{\mathcal I}$.
It is then iteratively grown by adding the customer $i \in \mathcal{I} \setminus \hat{\mathcal I}$ with the maximum distance to its closest
customer in the set $\hat{\mathcal I}$, until $|\hat{\mathcal I}| = p^H + 1$. Note that we exploit that in our test instances we have $\mathcal I=\mathcal J$ and thus we have distances between the customers available. If this would not be the case, a distance between $i,i'\in \mathcal I$ could be calculated as $d_{ii'}=\min_{\inJ} d_{ij}+d_{i'j}$.

After the initialization of $\hat{\mathcal I}$, the set is grown at each round of separation by determining the customer $i \in \mathcal{I} \setminus \hat{\mathcal I}$ with the largest violation. Additionally, if there are more than
\texttt{maxNoImprovementsFixed} consecutive iterations without an improvement in the lower bound we add more customers to $\hat{\mathcal I}$.
These customers are selected from a set $\overline{\mathcal{I}}$, which is initialized as $\overline{\mathcal{I}} = \mathcal{I} \setminus \hat{\mathcal I}$.
We then add the customer $i$ causing the largest violation of inequality \eqref{eq:nPCY-zPush}/\eqref{eq:nPCY-OPT} to $\hat{\mathcal I}$. After this, we remove all customers $i'$ with $d_{ii'}\leq LB^H$ from $\overline{\mathcal{I}}$, and then continue with this process of adding to $\hat{\mathcal I}$ and removing to $\overline{\mathcal{I}}$ until $\overline{\mathcal{I}}=\emptyset$. The idea behind this step is to get a diverse set of customers in $\hat{\mathcal I}$. Note that this step again exploits that in our instances we have $\mathcal I=\mathcal J$.

If the lower bound does not improve more than $\epsilon = 1e-5$ for \texttt{maxNoImprovements} separation
rounds, the separation is stopped at this B\&B node. Moreover, we perform at most \texttt{maxNumSepRoot} rounds of separation in the root-node of the branch-and-bound tree and at most \texttt{maxNumSepTree} rounds in all other nodes.

For a $h \in \mathcal H$, given a customer $i \in \mathcal I $ and a (partial) LP-relaxation solution $(z^{h*},y^{h*})$, the most violated inequality \eqref{eq:nPCY-OPT} can be found as follows (for details see \cite{Fischetti2017,GAAR2022}): Let $d_{ij}' = d_{ij}$ if $d_{ij} > LB^h$
and $d_{ij}' = LB^h$ otherwise (the separation of \eqref{eq:nPCY-zPush} can be done by setting $d_{ij}' = d_{ij}$ in all cases). Sort the facilities $\inJ$ in ascending order according to $d_{ij}'$. Only facilities
with $y_j^{h*} > 0$ need to be considered, as the others are not contributing to the potential violation. Furthermore, let us assume that the facilities are ordered
in the following way: $d_{i1} \leq \dots \leq d_{i|\mathcal{J}|}$. Let the \emph{critical facility} $j_i^h$ be the index such that $\sum_{j = 1}^{j^h_i -1} y^{h*}_j
    < 1 \leq \sum_{j = 1}^{j_i^h}  y^{h*}_j$. The inequality with the maximum violation is then
\begin{align*}
    z^h \geq d'_{ij^h_{i}} - \sum_{j':d'_{ij'} < d'_{ij^h_{i}}} (d'_{ij^h_{i}} - d'_{ij'})y_{j'}^h. 
\end{align*}

In our implementation, similar to \cite{GAAR2022}, the following parameter values were used for the fractional separation scheme:
\begin{itemize}
    \item \texttt{maxNoImprovements}: 100
    \item \texttt{maxNoImprovementsFixed}: 5
    \item \texttt{maxNumSepRoot}: 1000
    \item \texttt{maxNumSepTree}: 1
\end{itemize}

\paragraph{Primal heuristic}

We also implemented a primal heuristic for use with \nPCY within the \texttt{HeuristicCallback} of CPLEX. This is done, as the internal heuristics of CPLEX are not working as well as when using \nPC or \nPCE (where the full formulation is given to CPLEX) due to the on-the-fly addition of the constraints.. We retrieve the (partial) optimal solution $y^*$ of the LP-relaxation at the current branch-and-bound node and for each location $\inJ$ sum $y_j^{h*}$ up over all time periods $\inH$. We then sort the obtained values in descending order. This gives us the facilities which are open "the most" over the time periods in the relaxation-solution. Then we construct a feasible solution for the \npCPA by taking the first $p^1$ facilities and set the corresponding decision variables to one. Then we take the first $p^2$ facilities and so on, until we have done this for all time periods $\inH$. The nesting constraint holds per construction, as the facilities we open in $p^h$ are a subset of the facilities opened in $p^{h+1}$.

\section{Computational results} \label{sec:computational}

The experiments were run on a single core of an Intel Xeon X5570 machine with 2.93 GHz. All CPLEX parameters were left at the default values. The time limit was set to 3600 seconds and the memory limit to 8 GB. 

\subsection{Instances}

In our computational study we use two sets of benchmark instance that have been used for the \pCP in \cite{Elloumi2018,calik2013double,chen2009new,contardo2019scalable,GAAR2022}:
\begin{itemize}
    \item \pmed: This instance set contains 40 instances. For all instances all customer demand points are
          also potential facility locations, i.e., $\mathcal I = \mathcal J$ holds. We denote this set as nodes $V$ in the following. The number of nodes ($|V|$) ranges between 100 and 900. The instances are given as graphs, and the distances $d_{ij}$ are the shortest-path distances between
          $i,j\in V$ in the graph. The instances also contain values for $p$, with these values ranging from 5 to 200 depending on the instance (for the concrete value of $p$ see e.g.,  Table \ref{tab:nested_sH_pmed}).    
          As values for $\mathcal P$ for these instances we take $\{p,p+1,p+2\}$ for the $p$ given in the instance.
    \item \tsplib: This is an instance set that was originally introduced for the traveling salesperson problem in \cite{Reinelt1991}. Again, $\mathcal I =\mathcal J =V$. We consider in our computational study a subset of instances where $|V|$ ranges between 51 and 1002. The instance names contain the exact values of $|V|$, e.g., for instance \texttt{eil51} $|V| = 51$. 
    The instances contain the two-dimensional coordinates for each point, and the distance is calculated as the Euclidean distance. Following the literature on the \pCP, the distances were rounded to the nearest integer. In these instances, we use $\mathcal P=\{4,5,6\}$.
\end{itemize}

\subsection{Results}\label{sec:settings}
In this section, we compare the following three different settings for each of our three formulations. 
\begin{itemize}
    \item \noPP: The basic setting, where we do not run our preprocessing or use the starting heuristics. In particular, this means the following for each formulation.
    \begin{itemize}
        \item \nPC: Proposition \ref{prop:xy-lift} not used.
        \item \nPCY: Proposition \ref{prop:y-lift} not used. The \texttt{HeuristicCallback} is used (as this primal heuristic was implemented to mitigate the effects of CPLEX not having the complete formulation available in case of using formulation \nPCY).
        \item \nPCE: Proposition \ref{prop:npce-fixed} not used.
    \end{itemize}
    \item \PP: Preprocessing is used to obtain lower and upper bounds and these bounds are used according to Propositions \ref{prop:xy-lift}, \ref{prop:y-lift}, \ref{prop:npce-fixed}.
    \begin{itemize}
        \item \nPC: Proposition \ref{prop:xy-lift} is used.
        \item \nPCY: Proposition \ref{prop:y-lift} is used. The \texttt{HeuristicCallback} is used.
        \item \nPCE: Proposition \ref{prop:npce-fixed} is used.
    \end{itemize}
    \item \sH: \PP plus the starting heuristics described in \ref{sec:startHeuristic}
\end{itemize}

In Figure \ref{fig:settings} cumulative distribution functions (CDF) of the runtimes and optimality gaps are plotted for \nPC, \nPCY and \nPCE respectively, with above settings and over both sets of instances. The optimality gap is calculated as $100\frac{z^*-\bar z}{z^*}$ where $z^*$ is the value of the best solution found and $\bar z$ is the value of the lower bound.

\begin{figure}
    \centering
    \begin{subfigure}[t]{.5\textwidth}
        \centering
        \includegraphics[width=\linewidth]{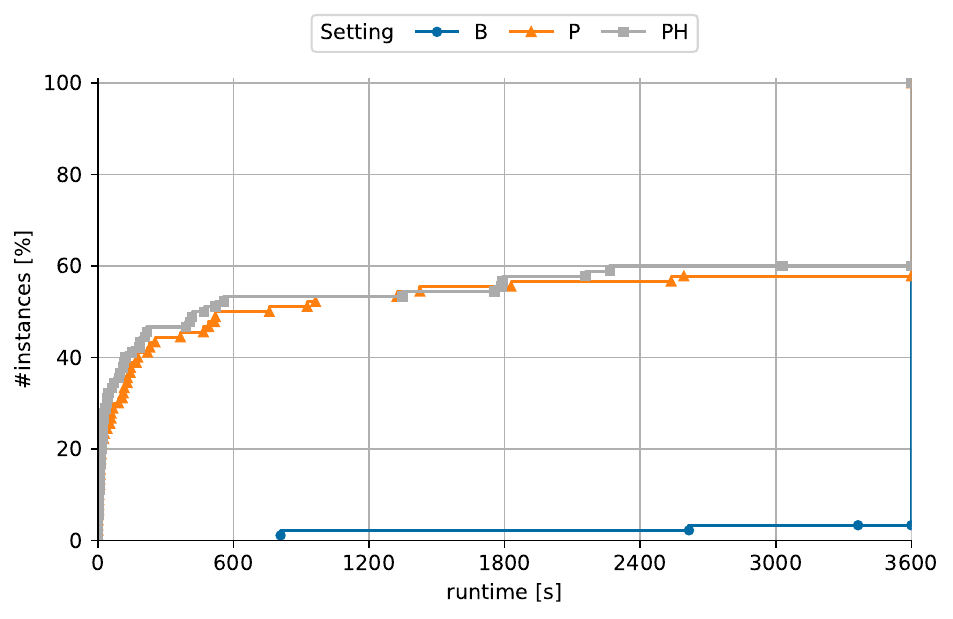}
        \caption{Runtimes for \nPC}
        \label{fig:xy-runtime}
    \end{subfigure}%
    \begin{subfigure}[t]{.5\textwidth}
        \centering
        \includegraphics[width=\linewidth]{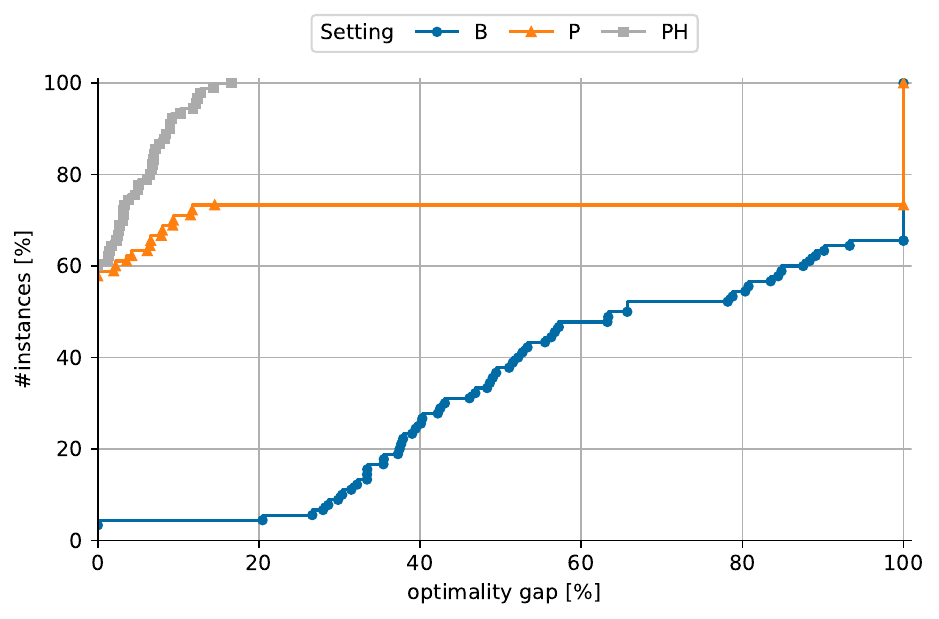}
        \caption{Optimality gaps for \nPC}
        \label{fig:xy-gap}
    \end{subfigure}%#
    
        \begin{subfigure}[t]{.5\textwidth}
        \centering
        \includegraphics[width=\linewidth]{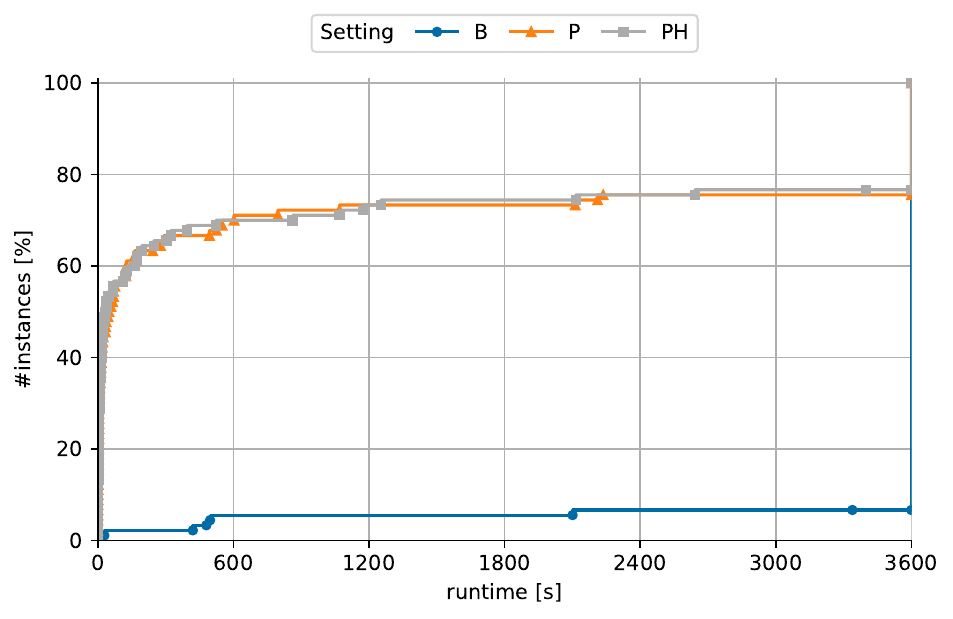}
        \caption{Runtimes for \nPCY}
        \label{fig:y-runtime}
    \end{subfigure}%
    \begin{subfigure}[t]{.5\textwidth}
        \centering
        \includegraphics[width=\linewidth]{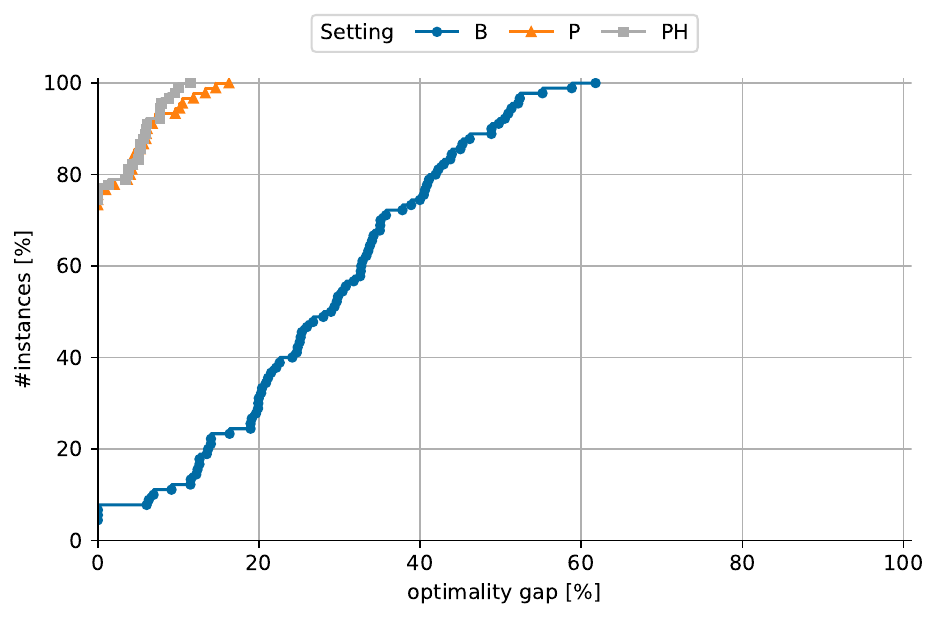}
        \caption{Optimality gaps for \nPCY}
        \label{fig:y-gap}%
    \end{subfigure}

    \begin{subfigure}[t]{.5\textwidth}
        \centering
        \includegraphics[width=\linewidth]{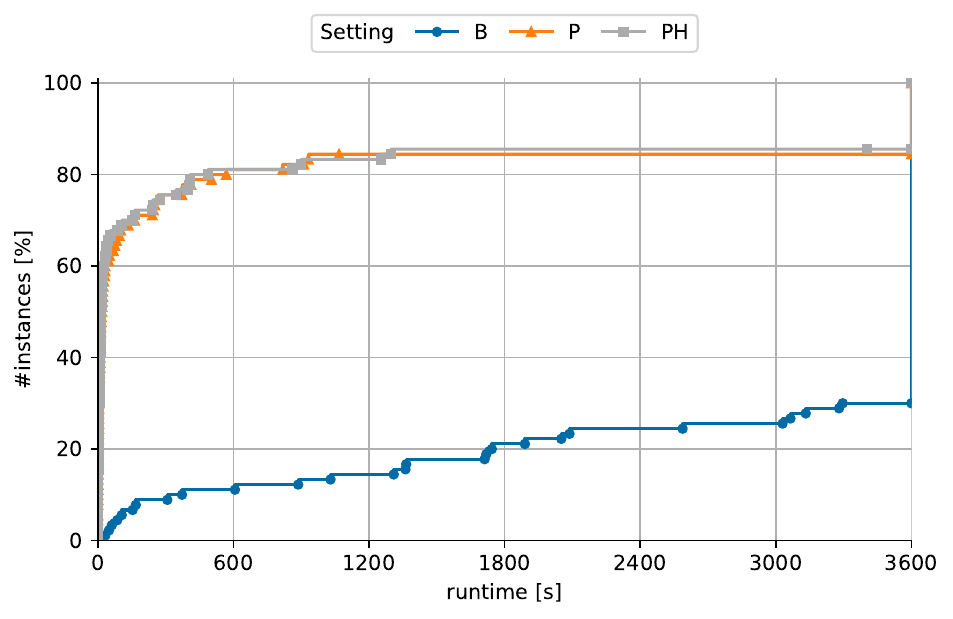}
        \caption{Runtimes for \nPCE}
        \label{fig:u-runtime}
    \end{subfigure}%
    \begin{subfigure}[t]{.5\textwidth}
        \centering
        \includegraphics[width=\linewidth]{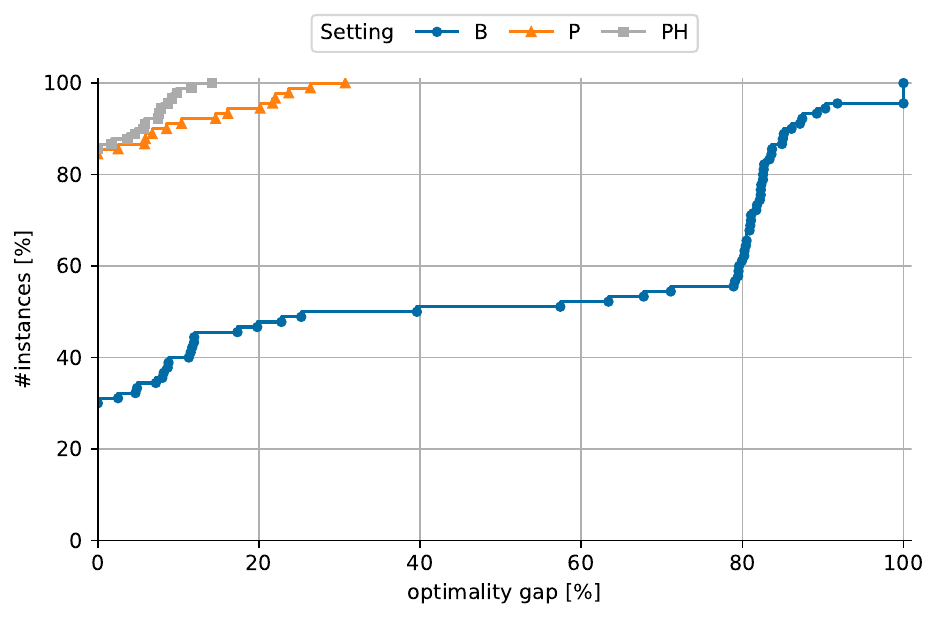}
        \caption{Optimality gaps for \nPCE}
        \label{fig:u-gap}
    \end{subfigure}
    \caption{Runtimes and optimality gaps for different settings and formulations}
    \label{fig:settings}
\end{figure}

In the figure, it can be seen that preprocessing is very important regardless of the used formulation. Depending on the formulation using preprocessing increases the number of instances which can be be solved within the given time from unter 5\% to nearly 60\% (\nPC), from around 10\% to nearly 80\% (\nPCY) and from around 30\% to over 80\%(\nPCE). This is not surprising in case of \nPCE and \nPCY, as for the \pCP it is known from \cite{GAAR2022} that the \pCP-version of Propositions \ref{prop:xy-lift}/\ref{prop:y-lift} is crucial to get better LP-relaxation bounds if such type of formulation is used. Interestingly, preprocessing also has a rather large effect when using formulation \nPCE, which is a formulation where the \pCP-counterpart is actually known to be stronger than the \pCP-counterpart of the \nPC/\nPCY formulations. 

The effect of using the starting heuristics is less pronounced, however for \nPC and \nPCE the setting \sH performs slightly better than \PP with regard to the optimality gaps. Overall, formulation \nPCE with setting \sH performs best, managing to solve over 85\% of instances to optimality within the given time limit, and having an optimality gap of at most around 15\% for the instances not solved to optimality. The performance of \nPCY with either setting \PP or \sH is not far off, the largest optimality gap is actually similar to \nPCE with \sH, however, it manages to only solve 75\% of instances to optimality within the time limit. 

Table \ref{tab:nested_sH_pmed} shows detailed results for all three formulations for the instance set \pmed and the best-performing setting \sH. The table gives the instance index ($idx$), the number of nodes ($|V|$), the value of $p$ (recall that $\mathcal P=\{p,p+1,p+2\}$), the runtimes (t[s]), the value of the best found solution (UB), the optimality gap ($g[\%]$) and the number of branch-and-bound nodes (bnb). The fastest runtime is highlighted in bold and TL is shown if the instance is not solved to optimality within the time limit. 

\begingroup\fontsize{6}{10}\selectfont
\begin{longtable}{lrrrrrrrrrrrrrr}
\caption{\label{tab:nested_sH_pmed}Comparison of the different formulations on the instance set \pmed and setting \sH\ for $\mathcal{P} = \left\{p, p+1, p+2\right\}$}\\
\toprule
\multicolumn{3}{c}{ } & \multicolumn{4}{c}{\nPC} & \multicolumn{4}{c}{\nPCY} & \multicolumn{4}{c}{\nPCE} \\
\cmidrule(l){4-7} \cmidrule(l){8-11} \cmidrule(l){12-15}
idx & $|V|$ & p & t[s] & UB & g[\%] & bnb & t[s] & UB & g[\%] & bnb & t[s] & UB & g[\%] & bnb \\
\midrule
\endfirsthead
\toprule
\multicolumn{3}{c}{ } & \multicolumn{4}{c}{\nPC} & \multicolumn{4}{c}{\nPCY} & \multicolumn{4}{c}{\nPCE} \\
\cmidrule(l){4-7} \cmidrule(l){8-11} \cmidrule(l){12-15}
idx & $|V|$ & p & t[s] & UB & g[\%] & bnb & t[s] & UB & g[\%] & bnb & t[s] & UB & g[\%] & bnb \\
\midrule
\endhead
\midrule
\multicolumn{15}{r}{Continued on next page} \\
\midrule
\endfoot
\bottomrule
\endlastfoot
1 & 100 & 5 & 62.08 & 356 & 0.00 & 38 & 19.78 & 356 & 0.00 & 2501 & \textbf{5.58} & 356 & 0.00 & 48 \\
2 & 100 & 10 & 71.98 & 292 & 0.00 & 450 & 111.35 & 292 & 0.00 & 16882 & \textbf{4.36} & 292 & 0.00 & 954 \\
3 & 100 & 10 & 2.39 & 278 & 0.00 & 0 & 0.95 & 278 & 0.00 & 4 & \textbf{0.31} & 278 & 0.00 & 0 \\
4 & 100 & 20 & 0.51 & 220 & 0.00 & 0 & \textbf{0.23} & 220 & 0.00 & 0 & 0.25 & 220 & 0.00 & 0 \\
5 & 100 & 33 & 0.64 & 138 & 0.00 & 0 & 0.42 & 138 & 0.00 & 0 & \textbf{0.32} & 138 & 0.00 & 0 \\
6 & 200 & 5 & 2265.99 & 247 & 0.00 & 788 & 324.09 & 247 & 0.00 & 14400 & \textbf{19.10} & 247 & 0.00 & 97 \\
7 & 200 & 10 & 217.42 & 188 & 0.00 & 42 & 10.10 & 188 & 0.00 & 240 & \textbf{2.28} & 188 & 0.00 & 0 \\
8 & 200 & 20 & 208.62 & 161 & 0.00 & 131 & \textbf{1.71} & 161 & 0.00 & 3 & 2.55 & 161 & 0.00 & 0 \\
9 & 200 & 40 & 2.88 & 109 & 0.00 & 0 & 1.03 & 109 & 0.00 & 0 & \textbf{0.95} & 109 & 0.00 & 0 \\
10 & 200 & 67 & 1.99 & 58 & 0.00 & 0 & 0.92 & 58 & 0.00 & 0 & \textbf{0.90} & 58 & 0.00 & 0 \\
11 & 300 & 5 & 1790.01 & 170 & 0.00 & 12 & \textbf{2.85} & 170 & 0.00 & 17 & 10.93 & 170 & 0.00 & 0 \\
12 & 300 & 10 & 2158.37 & 151 & 0.00 & 220 & \textbf{4.56} & 151 & 0.00 & 62 & 6.82 & 151 & 0.00 & 9 \\
13 & 300 & 30 & 4.78 & 107 & 0.00 & 0 & \textbf{1.55} & 107 & 0.00 & 0 & 1.61 & 107 & 0.00 & 0 \\
14 & 300 & 60 & 10.06 & 76 & 0.00 & 0 & 2.81 & 76 & 0.00 & 0 & \textbf{2.05} & 76 & 0.00 & 0 \\
15 & 300 & 100 & 5.63 & 52 & 0.00 & 0 & 3.09 & 52 & 0.00 & 0 & \textbf{2.46} & 52 & 0.00 & 0 \\
16 & 400 & 5 & TL & 141 & 0.03 & 0 & \textbf{4.43} & 137 & 0.00 & 27 & 7.30 & 137 & 0.00 & 0 \\
17 & 400 & 10 & TL & 116 & 0.02 & 6 & \textbf{17.33} & 115 & 0.00 & 92 & 45.10 & 115 & 0.00 & 47 \\
18 & 400 & 40 & 98.57 & 83 & 0.00 & 0 & 4.49 & 83 & 0.00 & 0 & \textbf{4.13} & 83 & 0.00 & 0 \\
19 & 400 & 80 & 7.65 & 54 & 0.00 & 0 & \textbf{3.43} & 54 & 0.00 & 0 & 3.52 & 54 & 0.00 & 0 \\
20 & 400 & 133 & 8.21 & 39 & 0.00 & 0 & 4.03 & 39 & 0.00 & 0 & \textbf{3.91} & 39 & 0.00 & 0 \\
21 & 500 & 5 & TL & 118 & 0.03 & 0 & \textbf{5.28} & 116 & 0.00 & 7 & 11.86 & 116 & 0.00 & 0 \\
22 & 500 & 10 & TL & 114 & 0.03 & 0 & \textbf{395.23} & 113 & 0.00 & 2502 & 486.92 & 113 & 0.00 & 2222 \\
23 & 500 & 50 & 12.41 & 66 & 0.00 & 0 & \textbf{4.78} & 66 & 0.00 & 0 & 4.80 & 66 & 0.00 & 0 \\
24 & 500 & 100 & 12.04 & 45 & 0.00 & 0 & \textbf{5.95} & 45 & 0.00 & 0 & 5.98 & 45 & 0.00 & 0 \\
25 & 500 & 167 & 13.17 & 51 & 0.00 & 0 & 6.66 & 51 & 0.00 & 0 & \textbf{6.64} & 51 & 0.00 & 0 \\
26 & 600 & 5 & TL & 112 & 0.03 & 0 & \textbf{10.57} & 110 & 0.00 & 19 & 243.19 & 110 & 0.00 & 3 \\
27 & 600 & 10 & TL & 96 & 0.03 & 0 & \textbf{18.15} & 94 & 0.00 & 60 & 19.42 & 94 & 0.00 & 0 \\
28 & 600 & 60 & 18.25 & 54 & 0.00 & 0 & \textbf{7.89} & 54 & 0.00 & 0 & 7.95 & 54 & 0.00 & 0 \\
29 & 600 & 120 & 20.60 & 39 & 0.00 & 0 & \textbf{10.06} & 39 & 0.00 & 0 & 11.38 & 39 & 0.00 & 0 \\
30 & 600 & 200 & 20.39 & 45 & 0.00 & 0 & 11.16 & 45 & 0.00 & 0 & \textbf{10.63} & 45 & 0.00 & 0 \\
31 & 700 & 5 & 116.45 & 88 & 0.00 & 0 & \textbf{7.67} & 88 & 0.00 & 0 & 12.64 & 88 & 0.00 & 0 \\
32 & 700 & 10 & TL & 87 & 0.02 & 0 & \textbf{304.36} & 86 & 0.00 & 1134 & 1252.49 & 86 & 0.00 & 339 \\
33 & 700 & 70 & 23.83 & 45 & 0.00 & 0 & \textbf{12.02} & 45 & 0.00 & 0 & 12.05 & 45 & 0.00 & 0 \\
34 & 700 & 140 & 26.85 & 33 & 0.00 & 0 & 14.29 & 33 & 0.00 & 0 & \textbf{14.12} & 33 & 0.00 & 0 \\
35 & 800 & 5 & TL & 90 & 0.03 & 0 & \textbf{11.97} & 88 & 0.00 & 0 & 34.40 & 88 & 0.00 & 0 \\
36 & 800 & 10 & TL & 81 & 0.01 & 0 & \textbf{47.65} & 81 & 0.00 & 29 & 345.19 & 81 & 0.00 & 13 \\
37 & 800 & 80 & 33.39 & 45 & 0.00 & 0 & 15.74 & 45 & 0.00 & 0 & \textbf{15.08} & 45 & 0.00 & 0 \\
38 & 900 & 5 & TL & 86 & 0.05 & 0 & \textbf{21.45} & 84 & 0.00 & 19 & 87.42 & 84 & 0.00 & 38 \\
39 & 900 & 10 & TL & 69 & 0.01 & 0 & \textbf{25.46} & 69 & 0.00 & 5 & 405.36 & 69 & 0.00 & 9 \\
40 & 900 & 90 & 48.14 & 39 & 0.00 & 0 & \textbf{22.37} & 39 & 0.00 & 0 & 24.67 & 39 & 0.00 & 0 \\
\end{longtable}
\endgroup{}

All 40 instances are solved to optimality with formulations \nPCY and \nPCE (while for 11 instances formulation \nPC runs into the time limit). The formulation \nPCY is able to solve 24 formulations faster than \nPCE, while \nPCE is faster for 16 instances. We note that for most of the instances these differences are minimal, i.e., within a few seconds or less, however, there are also instances, where this differences is quite pronounced, e.g., for instance 6 \nPCE is about 300 seconds faster, while for instance 32, \nPCY is about 900 seconds faster. The dependency on the size of the set of distinct distances of formulation \nPCE could be a probable cause for these mixed results. Moreover, most of the instances are solved within the root-node, indicating that \nPCY and \nPCE (including the propositions for the strengthening) seem both to be very strong formulations. Also for \nPC the number of branch-and-bound nodes is often zero, however sometimes the algorithms runs into the time limit while still being at node zero. A reason for this could be that the formulation \nPC has many more variables than \nPCY and \nPCE, thus solving the LP-relaxation becomes much more time-consuming. 

In Table \ref{tab:nested_sH_tsplib} similar results are presented for the \tsplib instances. The table gives the filenames (column $f$, which includes the number of nodes $|V|$), the runtimes (t[s]), the value of the best found solution (UB), the optimality gap ($g[\%]$) and the number of branch-and-bound nodes (bnb). The fastest runtime is highlighted in bold and TL is shown if the instance is not solved to optimality within the time limit.  Recall that for these instances $\mathcal P=\{4,5,6\}$. 

\begin{small}
    \begingroup\fontsize{6}{10}\selectfont
\begin{longtable}{lrrrrrrrrrrrr}
\caption{\label{tab:nested_sH_tsplib}Comparison of the different formulations on the instance set \tsplib and setting \sH\ for $\mathcal{P} = \left\{4, 5, 6\right\}$}\\
\toprule
  & \multicolumn{4}{c}{\nPC} & \multicolumn{4}{c}{\nPCY} & \multicolumn{4}{c}{\nPCE} \\
\cmidrule(l){2-5} \cmidrule(l){6-9} \cmidrule(l){10-13}
f & t[s] & UB & g[\%] & bnb & t[s] & UB & g[\%] & bnb & t[s] & UB & g[\%] & bnb \\
\midrule
\endfirsthead
\toprule
  & \multicolumn{4}{c}{\nPC} & \multicolumn{4}{c}{\nPCY} & \multicolumn{4}{c}{\nPCE} \\
\cmidrule(l){2-5} \cmidrule(l){6-9} \cmidrule(l){10-13}
f & t[s] & UB & g[\%] & bnb & t[s] & UB & g[\%] & bnb & t[s] & UB & g[\%] & bnb \\
\midrule
\endhead
\midrule
\multicolumn{13}{r}{Continued on next page} \\
\midrule
\endfoot
\bottomrule
\endlastfoot
eil51 & 6.94 & 61 & 0.00 & 89 & 1.99 & 61 & 0.00 & 448 & \textbf{0.84} & 61 & 0.00 & 53 \\
berlin52 & 0.78 & 1215 & 0.00 & 0 & \textbf{0.27} & 1215 & 0.00 & 9 & 0.33 & 1215 & 0.00 & 0 \\
st70 & 2.54 & 90 & 0.00 & 0 & 0.34 & 90 & 0.00 & 8 & \textbf{0.33} & 90 & 0.00 & 0 \\
eil76 & 21.92 & 64 & 0.00 & 100 & \textbf{0.95} & 64 & 0.00 & 53 & 1.36 & 64 & 0.00 & 28 \\
pr76 & 122.69 & 16330 & 0.00 & 749 & 38.63 & 16330 & 0.00 & 5563 & \textbf{16.65} & 16330 & 0.00 & 160 \\
rat99 & 471.08 & 144 & 0.00 & 1056 & 174.08 & 144 & 0.00 & 12230 & \textbf{7.56} & 144 & 0.00 & 121 \\
kroA100 & 40.37 & 2812 & 0.00 & 171 & 67.97 & 2812 & 0.00 & 5912 & \textbf{11.33} & 2812 & 0.00 & 232 \\
kroC100 & 520.80 & 2843 & 0.00 & 2973 & 862.44 & 2843 & 0.00 & 93601 & \textbf{39.07} & 2843 & 0.00 & 1071 \\
rd100 & 24.61 & 959 & 0.00 & 17 & \textbf{4.56} & 959 & 0.00 & 229 & 5.27 & 959 & 0.00 & 1 \\
kroB100 & 183.88 & 2866 & 0.00 & 600 & 193.94 & 2866 & 0.00 & 20626 & \textbf{13.18} & 2866 & 0.00 & 266 \\
kroD100 & 389.58 & 2862 & 0.00 & 1066 & 121.75 & 2862 & 0.00 & 11262 & \textbf{14.75} & 2862 & 0.00 & 120 \\
kroE100 & 1755.31 & 2893 & 0.00 & 7611 & 1174.39 & 2893 & 0.00 & 105046 & \textbf{152.64} & 2893 & 0.00 & 1034 \\
eil101 & 150.73 & 66 & 0.00 & 314 & 28.93 & 66 & 0.00 & 2056 & \textbf{2.75} & 66 & 0.00 & 53 \\
lin105 & 110.40 & 2067 & 0.00 & 432 & 38.89 & 2067 & 0.00 & 3101 & \textbf{9.88} & 2067 & 0.00 & 73 \\
pr107 & 16.23 & 5170 & 0.00 & 64 & 3398.78 & 5170 & 0.00 & 340384 & \textbf{0.99} & 5170 & 0.00 & 0 \\
pr124 & 10.18 & 7370 & 0.00 & 0 & 3.17 & 7370 & 0.00 & 119 & \textbf{1.59} & 7370 & 0.00 & 0 \\
bier127 & 94.81 & 15936 & 0.00 & 20 & \textbf{6.31} & 15936 & 0.00 & 326 & 8.47 & 15936 & 0.00 & 0 \\
ch130 & 417.33 & 664 & 0.00 & 459 & 67.85 & 664 & 0.00 & 3328 & \textbf{18.51} & 664 & 0.00 & 68 \\
pr136 & 42.51 & 9318 & 0.00 & 223 & 33.42 & 9318 & 0.00 & 2158 & \textbf{4.77} & 9318 & 0.00 & 37 \\
pr144 & 186.16 & 9853 & 0.00 & 155 & 172.28 & 9853 & 0.00 & 8706 & \textbf{8.80} & 9853 & 0.00 & 62 \\
ch150 & 557.21 & 647 & 0.00 & 608 & 163.22 & 647 & 0.00 & 8705 & \textbf{15.78} & 647 & 0.00 & 128 \\
kroB150 & TL & 2927 & 0.05 & 901 & 1069.37 & 2872 & 0.00 & 41919 & \textbf{163.30} & 2872 & 0.00 & 349 \\
kroA150 & TL & 3000 & 0.06 & 1114 & 2642.48 & 2934 & 0.00 & 110366 & \textbf{240.49} & 2934 & 0.00 & 1110 \\
pr152 & TL & 14430 & 0.07 & 1683 & TL & 14417 & 0.06 & 139711 & \textbf{408.30} & 14417 & 0.00 & 812 \\
u159 & 1348.38 & 4756 & 0.00 & 861 & 522.74 & 4756 & 0.00 & 26003 & \textbf{32.90} & 4756 & 0.00 & 278 \\
rat195 & TL & 210 & 0.08 & 162 & TL & 206 & 0.03 & 55800 & \textbf{397.21} & 205 & 0.00 & 582 \\
d198 & 1785.48 & 1583 & 0.00 & 23 & 248.14 & 1583 & 0.00 & 8066 & \textbf{55.42} & 1583 & 0.00 & 13 \\
kroA200 & TL & 3000 & 0.08 & 207 & TL & 2990 & 0.05 & 58393 & \textbf{1297.73} & 2976 & 0.00 & 1859 \\
kroB200 & TL & 2982 & 0.08 & 265 & TL & 2973 & 0.05 & 61558 & \textbf{899.48} & 2939 & 0.00 & 1074 \\
tsp225 & TL & 367 & 0.07 & 163 & TL & 367 & 0.04 & 61584 & \textbf{860.87} & 367 & 0.00 & 1885 \\
ts225 & 3029.51 & 12575 & 0.00 & 1171 & TL & 12575 & 0.04 & 47100 & \textbf{22.53} & 12575 & 0.00 & 313 \\
pr226 & 409.03 & 11812 & 0.00 & 214 & 129.50 & 11812 & 0.00 & 3403 & \textbf{16.97} & 11812 & 0.00 & 74 \\
gil262 & TL & 189 & 0.01 & 180 & 1252.54 & 189 & 0.00 & 21468 & \textbf{38.17} & 189 & 0.00 & 102 \\
pr264 & TL & 4809 & 0.03 & 538 & TL & 4809 & 0.04 & 52190 & \textbf{105.30} & 4809 & 0.00 & 1570 \\
a280 & TL & 237 & 0.13 & 6 & TL & 228 & 0.08 & 27558 & TL & 228 & 0.04 & 987 \\
pr299 & TL & 4322 & 0.06 & 101 & TL & 4312 & 0.05 & 25956 & \textbf{3403.87} & 4245 & 0.00 & 1373 \\
lin318 & TL & 3695 & 0.09 & 2 & TL & 3695 & 0.08 & 26439 & TL & 3686 & 0.08 & 321 \\
linhp318 & TL & 3695 & 0.09 & 0 & TL & 3695 & 0.08 & 25417 & TL & 3695 & 0.08 & 136 \\
rd400 & TL & 987 & 0.05 & 0 & TL & 963 & 0.01 & 19747 & \textbf{275.57} & 962 & 0.00 & 72 \\
fl417 & TL & 1714 & 0.04 & 0 & \textbf{19.90} & 1662 & 0.00 & 153 & 23.85 & 1662 & 0.00 & 0 \\
pr439 & TL & 10056 & 0.07 & 0 & \textbf{2115.38} & 9784 & 0.00 & 12554 & TL & 9789 & 0.02 & 78 \\
pcb442 & TL & 3321 & 0.07 & 3 & TL & 3321 & 0.06 & 12779 & TL & 3321 & 0.05 & 478 \\
d493 & TL & 2598 & 0.12 & 0 & TL & 2501 & 0.05 & 11138 & TL & 2598 & 0.10 & 2 \\
u574 & TL & 2594 & 0.14 & 0 & TL & 2585 & 0.11 & 10011 & TL & 2594 & 0.12 & 2 \\
rat575 & TL & 380 & 0.10 & 0 & TL & 380 & 0.09 & 7070 & TL & 380 & 0.09 & 12 \\
p654 & TL & 4935 & 0.12 & 0 & TL & 4790 & 0.06 & 9075 & TL & 4802 & 0.06 & 491 \\
d657 & TL & 3390 & 0.17 & 0 & TL & 3206 & 0.10 & 6769 & TL & 3390 & 0.14 & 0 \\
rat783 & TL & 435 & 0.09 & 0 & TL & 435 & 0.08 & 3619 & TL & 435 & 0.07 & 0 \\
pr1002 & TL & 12761 & 0.12 & 0 & TL & 12761 & 0.10 & 3096 & TL & 12761 & 0.09 & 0 \\
u1060 & TL & 11235 & 0.07 & 0 & TL & 11227 & 0.06 & 2811 & TL & 11235 & 0.06 & 0 \\
\end{longtable}
\endgroup{}
\end{small}

This instance-set turns out to be more difficult compared to \pmed. A reason for this could be that the distances in this set are based on (rounded) Euclidean distances and not shortest-paths in graphs, resulting in a greater range of distances and thus a greater range potential optimal solutions. For formulation \nPCE this also means that there are more $u$-variables. Using formulation \nPC 25 out of the 50 instances could be solved to optimality, with \nPCY 29 and with \nPCE 37. For most of the instances \nPCE outperforms \nPCY, however, there is also instance \texttt{pr439} which is solved to optimality by \nPCY but not by \nPCE. We also see that \nPCE sometimes runs into the time limit while still being at the root node, which indicates that for these instances the LP-relaxations are more time consuming.

\subsection{Managerial insights}

Next, we focus on the impact of nesting on the objective function value and the structure of the obtained solutions.
In Figure \ref{fig:rel-reg-opt} a CDF-plot of the \emph{relative cost of nesting (RC)} defined as $\frac{\sum_{\inH} \mathcal R_A(h,\mathcal J^{h*})}{\sum_{\inH} d^{h*}}$ for the optimal/best found solution $(\mathcal J^{1*}, \ldots, \mathcal J^{h*})$ of each instance, is given. This value is below 2.5 \% for instances for the set \pmed and below 15 \% for instances from the set \tsplib. This means that for these instances the nested solution is not much worse than the corresponding optimal \pCP-solutions.
Figure \ref{fig:nPC-rel-reg} provides a plot of the \emph{relative increase in number of open facilities (RI)} defined as $\frac{|\mathcal J^*|-p^H}{p^H}$ for each each instance, where $\mathcal J^*=\{j \in \mathcal J: j \text{ was open in the optimal solution CPLEX found when solving the $\pCP$ for a } p \in \mathcal P \}$\sloppy\ for each instance. For instance set $\pmed$ there is a minimal RI of around 25\% and for instance set $\tsplib$ the minimal RI is over 60\%. For both instance sets the maximal RI is around 150\%. 

\begin{figure}[h!tb]
    \begin{subfigure}[t]{.5\textwidth}
        \centering
        \includegraphics[width=\linewidth]{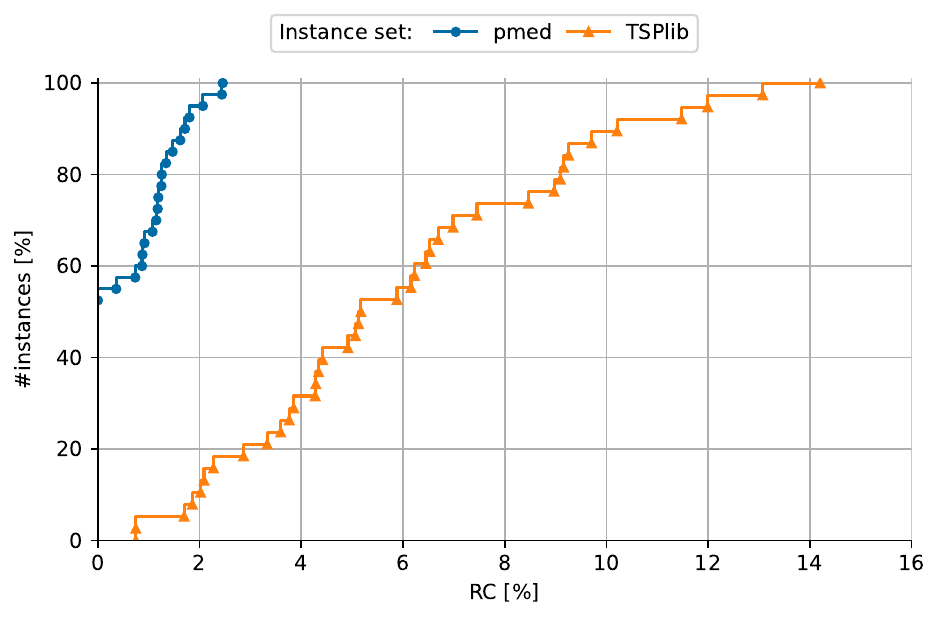}
        \caption{Relative cost of nesting}
        \label{fig:rel-reg-opt}
    \end{subfigure}%
    \begin{subfigure}[t]{.5\textwidth}
        \centering
        \includegraphics[width=\linewidth]{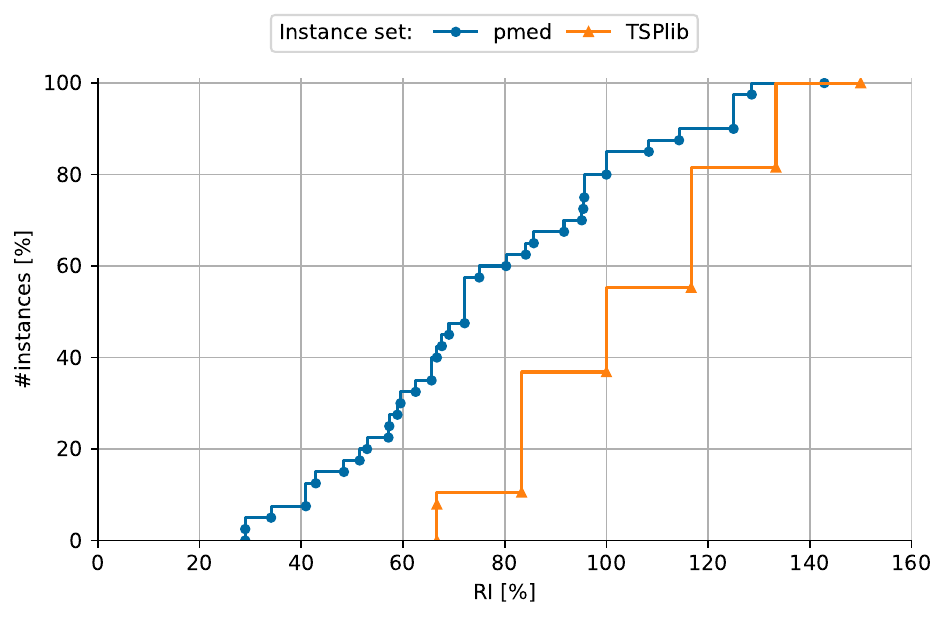}
        \caption{Relative increase in number of open facilities}
        \label{fig:rel-reg-fac}
    \end{subfigure}
    \caption{Comparison of the relative regrets of the optimal solution value and number of open facilities for \pCP\ and \npCP}
    \label{fig:nPC-rel-reg}
\end{figure}

Thus, overall we see that for the considered instances, using the concept of nesting when dealing with the \pCP in a multi-period settings results in modest increase in solution cost, while the number of different open facilities in the optimal solutions obtained when just considering the \pCP individually over the time horizon without nesting is often considerably larger. This means that the consistent solutions provided by solving the \npCPA can be very attractive for decision makers.

\section{The nested \texorpdfstring{$p$-center}{p-center} problem with the minimizing the maximum relative regret objective} \label{sec:regret}

In this section, we first present two MILP formulations for the \npCPR, followed by implementation details and a computational study. 

\subsection{Mixed-integer linear programming formulations}\label{sec:milpr}
The following formulation is based on \nPCY and uses similar decision variables. To measure the maximal regret over the time periods we introduce the additional continuous decision variable $w$.
\begin{subequations}\label{eq:nPCYR}
    \begin{align}
         & \nPCYR &  & \min         & w                      &                                                                      &  & \label{eq:nPCYR-objective}                                                 \\
         &        &  & \text{ s.t.} & \sum_{\inJ}    y_{j}^h & =     p^h                                                            &  & \forall \inH                                     \label{eq:nPCYR-numOfFac} \\
         &        &  &              & z^{h}                  & \geq  d_{ij} - \sum_{j':d_{ij'} < d_{ij}} (d_{ij} - d_{ij'})y_{j'}^h &  & \forall \inI, \inJ, \inH                         \label{eq:nPCYR-zPush}    \\
         &        &  &              & w                      & \geq  \frac{z^h - d^{h*}}{d^{h*}}                                    &  & \forall \inH                                     \label{eq:npCYR-regret}   \\
         &        &  &              & y_j^{h}                & \geq  y_j^{h-1}                                                      &  & \forall \inJ, \inH \setminus \left \{1 \right \} \label{eq:nPCYR-nested}   \\
         &        &  &              & y_{j}^h                & \in   \{\text{0, 1}\}                                                &  & \forall \inJ, \inH                               \label{eq:nPCYR-binY}     \\
         &        &  &              & z^{h}                  & \in   \mathbb R_{\ge 0}                                              &  & \forall \inH                                     \label{eq:nPCYR-nonnegZ}  \\
         &        &  &              & w                      & \in   \mathbb R_{\ge 0}                                              &  & \label{eq:nPCYR-nonnegw}
    \end{align}
\end{subequations}
The formulation is very similar to \nPCY and only contains an additional set of constraints \eqref{eq:npCYR-regret}, that ensures that decision variable $w$ 
is at least as large as the largest relative regret of all time periods. The objective function now minimizes the decision variable $w$ and not the sum of the decision variables $z^h$.

Similar as for formulation \nPCY for the \npCPR, inequalities \eqref{eq:nPCYR-zPush} can be strengthened. However, note that this time the conditions on the lower bound $LB^h$ are a bit different compared to before (i.e., in the propositions for the formulations for \npCPA the condition was connected with the lower bound of the optimal solution value  ).

\begin{proposition}
   Let $LB^h \geq 0$ be a valid lower bound on the optimal objective function value of the \pCP for $p^h=p$ for a given $\in H$. Then
      \begin{equation}
        z^h \geq \max \left\{LB^h, d_{ij}\right\} - \sum_{j':d_{ij'} < d_{ij}} \left(\max \left\{LB^h, d_{ij}\right\} - \max \left\{LB^h, d_{ij'}\right\}\right)y_{j}^h \label{eq:L-OPTR2}
    \end{equation}
    is valid for \nPCYRN, i.e. every feasible solution of \nPCYRN\ fulfills \eqref{eq:L-OPTR2}.
\end{proposition}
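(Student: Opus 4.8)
The plan is to mimic, step for step, the proof of Proposition~\ref{prop:xy-lift} (which is also the template for Proposition~\ref{prop:y-lift}). Fix an arbitrary feasible solution $(\bar y,\bar z,\bar w)$ of \nPCYRN\ and an index triple $i\in\mathcal I$, $j\in\mathcal J$, $h\in\mathcal H$, and set $\bar{\mathcal J}^h=\{j'\in\mathcal J:\bar y^h_{j'}=1\}$ and $\bar d_i=\min_{j'\in\bar{\mathcal J}^h}d_{ij'}$. We must show that \eqref{eq:L-OPTR2} holds for this triple. The only substantive difference to the \npCPA\ case lies in how $\bar z^h\ge LB^h$ is obtained: there $LB^h$ was assumed to be a lower bound on $z^h$ in an optimal solution, whereas here it is only a lower bound on $d^{h*}$, so the first task is to check that this weaker hypothesis still yields $\bar z^h\ge LB^h$ for \emph{every} feasible solution.

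For that, use the constraints \eqref{eq:nPCYR-zPush} of \nPCYRN. Instantiating \eqref{eq:nPCYR-zPush} with the reference index taken to be a facility of $\bar{\mathcal J}^h$ closest to $i$ makes the correction sum vanish (no facility strictly closer to $i$ is open), giving $\bar z^h\ge\bar d_i$; taking the maximum over $i\in\mathcal I$ yields $\bar z^h\ge d(\bar{\mathcal J}^h)$. Since $|\bar{\mathcal J}^h|=p^h$ by \eqref{eq:nPCYR-numOfFac} and $d^{h*}$ is the minimum of $d(\cdot)$ over all $p^h$-element subsets of $\mathcal J$, we get $\bar z^h\ge d(\bar{\mathcal J}^h)\ge d^{h*}\ge LB^h$. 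Combined with $\bar z^h\ge\bar d_i$ this gives $\bar z^h\ge\max\{LB^h,\bar d_i\}$, which is the inequality to be compared against the right-hand side of \eqref{eq:L-OPTR2}.

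The main argument is then a short case distinction on where $LB^h$ sits relative to $d_{ij}$, using throughout that every coefficient $\max\{LB^h,d_{ij}\}-\max\{LB^h,d_{ij'}\}$ in the subtracted sum of \eqref{eq:L-OPTR2} is nonnegative (the map $t\mapsto\max\{LB^h,t\}$ is nondecreasing and $d_{ij'}<d_{ij}$). If $d_{ij}\le LB^h$, then every $j'$ in the sum has $d_{ij'}<d_{ij}\le LB^h$, so $\max\{LB^h,d_{ij'}\}=LB^h=\max\{LB^h,d_{ij}\}$, the subtracted sum is identically zero, and \eqref{eq:L-OPTR2} reduces to $\bar z^h\ge LB^h$, already shown. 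If $d_{ij}>LB^h$ and no facility of $\bar{\mathcal J}^h$ lies strictly closer to $i$ than $d_{ij}$, then $\bar d_i\ge d_{ij}$, every summed variable $\bar y^h_{j'}$ is zero, and \eqref{eq:L-OPTR2} reduces to $\bar z^h\ge d_{ij}$, which holds because $\bar z^h\ge\bar d_i\ge d_{ij}$. Finally, if $d_{ij}>LB^h$ and some $j^\star\in\bar{\mathcal J}^h$ satisfies $d_{ij^\star}=\bar d_i<d_{ij}$, then, since all coefficients are nonnegative and $\bar y^h_{j^\star}=1$, the subtracted sum is at least $\max\{LB^h,d_{ij}\}-\max\{LB^h,d_{ij^\star}\}$, so the right-hand side of \eqref{eq:L-OPTR2} is at most $\max\{LB^h,d_{ij^\star}\}=\max\{LB^h,\bar d_i\}\le\bar z^h$. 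All cases are covered, so \eqref{eq:L-OPTR2} is valid.

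I do not expect any real obstacle: as for Proposition~\ref{prop:y-lift}, this is a minor variant of the proof of Proposition~\ref{prop:xy-lift}. The two points that need care are (i) reading off the coefficients of the subtracted sum correctly when $LB^h$ lies strictly between some $d_{ij'}$ and $d_{ij}$ (a $j'$ with $d_{ij'}<LB^h<d_{ij}$ contributes $d_{ij}-LB^h$, not $d_{ij}-d_{ij'}$), and (ii) being explicit that it is exactly the hypothesis $LB^h\le d^{h*}$ --- and not a bound on the optimal value of \nPCYRN, as in the \npCPA\ propositions --- that secures $\bar z^h\ge LB^h$ on all feasible solutions; this is the reason the statement of the present proposition is phrased differently from its \npCPA\ counterparts.
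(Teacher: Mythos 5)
Your proof is correct and follows essentially the same route as the paper: the paper's proof simply notes that $z^h\geq d^{h*}\geq LB^h$ holds for every feasible solution (because the open set has cardinality $p^h$) and then invokes the same case analysis as Proposition~\ref{prop:xy-lift}, which is exactly what you carry out, just with the two steps made fully explicit (deriving $\bar z^h\geq d(\bar{\mathcal J}^h)$ from constraints \eqref{eq:nPCYR-zPush} and adapting the case distinction to the $y$-only form of the inequality). Your closing remark correctly identifies the key difference in hypothesis from the \npCPA\ propositions, and your reading of the statement as referring to the formulation containing the $z^h$-variables (\nPCYR) is the intended one.
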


\begin{proof}
Note that the value of $z^h$ must always be at least the value of the the optimal objective function value of the \pCP for $p^h=p$. The validity then follows from similar arguments as in the proof of Proposition \ref{prop:xy-lift}. $\Box$
\end{proof}

It is possible to remove the variables $z^h$ from \nPCYR to get a new formulation which we denote as \nPCYRN. 
\begin{subequations}\label{eq:nPCYRN}
    \begin{align}
         & \nPCYRN  && \min            & w                    &                                                                                                     && \label{eq:nPCYRN-objective}                      \\
         &          && \text{ s.t.}    & \sum_{\inJ}y_{j}^h   &= p^h                                                                                                && \forall \inH \label{eq:nPCYRN-numOfFac}          \\
         &          &&                 & w                    &\geq \frac{1}{d^{h*}}\left(d_{ij}-\sum_{j':d_{ij'}<d_{ij}}\left(d_{ij}-d_{ij'}\right)y_{j'}^h\right) -1&& \forall \inI, \inJ, \inH \label{eq:nPCYRN-wPush} \\
         &          &&                 & y_j^{h}              &\geq y_j^{h-1}                                                              && \forall \inJ, \inH \setminus \left \{1 \right \} \label{eq:nPCYRN-nested} \\
         &          &&                 & y_{j}^h              &\in \{\text{0, 1}\}                                                                                  && \forall \inJ, \inH \label{eq:nPCYRN-binY}        \\
         &          &&                 & w                    &\in \mathbb R_{\ge 0}                                                                                && \label{eq:nPCYRN-nonnegw}
    \end{align}
\end{subequations}
In this new formulation, the constraints \eqref{eq:nPCYR-zPush} and \eqref{eq:npCYR-regret} are combined into \eqref{eq:nPCYRN-wPush}. This allows us to omit the decision variables $z$. For correctness, consider the following proposition.
\begin{proposition}\label{prop:reform}
    The formulation \nPCYRN\ is a valid formulation for the \npCPR.
\end{proposition}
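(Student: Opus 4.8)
The plan is to show that \nPCYRN\ and \nPCYR\ have the same set of feasible $(y,w)$-projections and the same optimal value, which—since \nPCYR\ is already a valid formulation for the \npCPR\ by construction—suffices. First I would fix a feasible $y$-vector (i.e.\ satisfying the cardinality constraints \eqref{eq:nPCYRN-numOfFac} and the nesting constraints \eqref{eq:nPCYRN-nested}, which are common to both formulations) and analyze, for each $h\in\mathcal H$, what the smallest admissible value of the relevant variables is. In \nPCYR, constraints \eqref{eq:nPCYR-zPush} force $z^h \geq \max_{\inI}\bigl(d_{ij}-\sum_{j':d_{ij'}<d_{ij}}(d_{ij}-d_{ij'})y_{j'}^h\bigr)$ for each $\inI,\inJ$; the key observation (already implicit in the discussion of \nPCY) is that the tightest of these bounds over $j$, for a fixed customer $i$, equals $\min_{j:y_j^h=1} d_{ij}$, so the minimal feasible $z^h$ equals $d(\mathcal J^h)=\max_{\inI}\min_{j:y_j^h=1}d_{ij}$. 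Then \eqref{eq:npCYR-regret} forces $w \geq (z^h-d^{h*})/d^{h*}$ for every $h$; combined with the monotonicity of this expression in $z^h$ and $d^{h*}>0$, the minimal feasible $w$ for the given $y$ is $\max_{\inH}\bigl(d(\mathcal J^h)-d^{h*}\bigr)/d^{h*}=\max_{\inH}\mathcal R_R(h,\mathcal J^h)$.

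Next I would perform the same analysis for \nPCYRN. Constraint \eqref{eq:nPCYRN-wPush} reads $w \geq \frac{1}{d^{h*}}\bigl(d_{ij}-\sum_{j':d_{ij'}<d_{ij}}(d_{ij}-d_{ij'})y_{j'}^h\bigr)-1$ for each $\inI,\inJ,\inH$. Since $d^{h*}>0$, the right-hand side is an increasing affine function of the quantity $d_{ij}-\sum_{j':d_{ij'}<d_{ij}}(d_{ij}-d_{ij'})y_{j'}^h$, so taking the maximum over $i$ and $j$ yields exactly $\frac{1}{d^{h*}}\bigl(d(\mathcal J^h)-d^{h*}\bigr)=\mathcal R_R(h,\mathcal J^h)$ using the same inner-minimization identity as above; taking the maximum over $h$ gives $w \geq \max_{\inH}\mathcal R_R(h,\mathcal J^h)$, and $w\geq 0$ is not binding because regrets are nonnegative. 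Hence for every feasible $y$, the minimal feasible $w$ in \nPCYRN\ coincides with the minimal feasible $w$ in \nPCYR. This establishes a value-preserving correspondence: any feasible solution of \nPCYRN\ extends to a feasible solution of \nPCYR\ with the same $w$ (set $z^h:=d(\mathcal J^h)$, or any value in $[\,d(\mathcal J^h),\,d^{h*}(1+w)\,]$), and conversely projecting out $z$ from any feasible solution of \nPCYR\ gives a feasible solution of \nPCYRN\ with the same $w$; in particular the optimal values agree, and since \nPCYR\ models the \npCPR, so does \nPCYRN.

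The step I expect to require the most care is the algebraic verification that $\max_{\inJ}\bigl(d_{ij}-\sum_{j':d_{ij'}<d_{ij}}(d_{ij}-d_{ij'})y_{j'}^h\bigr)=\min_{j:y_j^h=1}d_{ij}$ for each fixed $i$ and each $0/1$ vector $y^h$ with $\sum_j y_j^h\geq 1$: one checks that choosing $j$ to be the closest open facility makes the sum telescope/vanish appropriately so the bound is attained, and that no other choice of $j$ gives a larger value. This is the same fact underlying the correctness of \nPCY\ and of Proposition~\ref{prop:y-lift}, so I would either cite that discussion or reproduce the short argument. Everything else is routine: the reduction is purely that $z^h$ appears in \nPCYR\ only through \eqref{eq:nPCYR-zPush} (lower bounds) and \eqref{eq:npCYR-regret} (an upper-bounding role relative to $w$), so $z^h$ can always be driven down to $d(\mathcal J^h)$ without affecting feasibility of anything else, and then the two constraints compose into \eqref{eq:nPCYRN-wPush}.
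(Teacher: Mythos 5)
Your proposal is correct and rests on the same idea as the paper's own proof: $z^h$ occurs in \nPCYR only as a variable bounded below by \eqref{eq:nPCYR-zPush} and linked to $w$ through \eqref{eq:npCYR-regret}, so it can be projected out, composing the two constraint families into \eqref{eq:nPCYRN-wPush}. The paper simply rewrites \eqref{eq:npCYR-regret} as $w+1\geq z^h/d^{h*}$ and substitutes the lower bound on $z^h$, whereas you additionally verify both directions of the correspondence for fixed binary $y$ (including the identity that the maximum over $\inJ$ of the right-hand side of \eqref{eq:nPCYR-zPush} equals $\min_{j:y_j^h=1}d_{ij}$, hence the minimal $w$ equals the maximum relative regret); this extra detail makes the argument more self-contained but is not a genuinely different route.
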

\begin{proof}
    Constraints \eqref{eq:npCYR-regret} can be rewritten into
    \begin{align*}
         & w + 1 \geq \frac{1}{d^{h*}} z^h & \forall \inH.
    \end{align*}
    As this has to hold for every $\inH$ and therefore every $z^h$ we can further reformulate it to
    \begin{align*}
         & w \geq \frac{1}{d^{h*}} \left(d_{ij} - \sum_{j':d_{ij'} < d_{ij}} \left(d_{ij} - d_{ij'}\right)y_{j'}^h \right) -1 & \forall \inI, \inJ, \inH
    \end{align*}
   $\Box$
\end{proof}

Inequalities \eqref{eq:nPCYRN-wPush} can be strengthened using a lower bound $LB$ on the objective function value of the \npCPR as follows.

\begin{proposition}
\label{lemma:optr}
    Let $LB \geq 0$ be a valid lower bound on the optimal objective function value of the \npCPR. Then for any $\inI, \inJ$ and $\inH$ the inequality
    \begin{equation}
        w \geq \frac{1}{d^{h*}} \left(\max \left\{LB d^{h*}, d_{ij}\right\} - \sum_{j':d_{ij'} < d_{ij}} \left(\max \left\{LB d^{h*}, d_{ij}\right\} - \max \left\{LB d^{h*}, d_{ij'}\right\}\right)y_{j'}^h \right)-1 \label{eq:L-OPTR} %\tag{\emph{L-OPTR}}
    \end{equation}
    is valid for \nPCYRN, i.e. every feasible solution of \nPCYRN\ fulfills \eqref{eq:L-OPTR}.
\end{proposition}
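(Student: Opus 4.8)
The plan is to follow the template of Proposition~\ref{prop:xy-lift} (and of the analogous strengthening of~\eqref{eq:nPCYR-zPush}), adapting the two ingredients that are specific to the relative-regret objective. Fix any feasible integer solution $(\bar y,\bar w)$ of \nPCYRN, a period $\inH$, a customer $\inI$ and a facility $\inJ$; let $\mathcal J^h=\{j\in\mathcal J:\bar y_j^h=1\}$, let $r=\min_{j\in\mathcal J^h}d_{ij}$ be the service distance of $i$ in period $h$, and abbreviate $L=LB\,d^{h*}$ (recall $d^{h*}>0$). I want to show that $\bar w$ is at least the right-hand side of~\eqref{eq:L-OPTR} for these indices.

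First I would record two lower bounds on $\bar w$. (i) Constraints~\eqref{eq:nPCYRN-wPush} force $\bar w$ to be at least the relative regret $\tfrac{d(\mathcal J^h)-d^{h*}}{d^{h*}}$ in every period, hence at least the maximum over all periods of these relative regrets, i.e.\ at least the objective value of $\bar y$ in the \npCPR; this value is in turn at least the optimal \npCPR\ value and therefore at least $LB$. Equivalently, $\tfrac{1}{d^{h*}}L-1=LB-1\le\bar w$. This is the point alluded to in the remark preceding the statement: because $LB$ bounds the optimal \npCPR\ value and $w$ is the objective, $\bar w\ge LB$ holds for \emph{every} feasible solution, not merely optimal ones, which is what licenses lifting with $L$. (ii) Choosing in~\eqref{eq:nPCYRN-wPush} a facility $j^\star\in\mathcal J^h$ with $d_{ij^\star}=r$: every $j'$ with $d_{ij'}<r$ is closed, so the sum in~\eqref{eq:nPCYRN-wPush} vanishes and we obtain $\bar w\ge\tfrac{1}{d^{h*}}r-1$.

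The core step is the inequality
\[
\max\{L,d_{ij}\}-\sum_{j':d_{ij'}<d_{ij}}\bigl(\max\{L,d_{ij}\}-\max\{L,d_{ij'}\}\bigr)\bar y_{j'}^h\ \le\ \max\{L,r\},
\]
which I would prove by a short case distinction. Each coefficient $\max\{L,d_{ij}\}-\max\{L,d_{ij'}\}$ with $d_{ij'}<d_{ij}$ is nonnegative, since $t\mapsto\max\{L,t\}$ is nondecreasing. If no open facility is strictly closer to $i$ than $d_{ij}$ (equivalently $r\ge d_{ij}$), the sum is $\ge0$, so the left-hand side is at most $\max\{L,d_{ij}\}\le\max\{L,r\}$. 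Otherwise $r<d_{ij}$, the facility $j^\star$ from (ii) belongs to the index set $\{j':d_{ij'}<d_{ij}\}$, and keeping only its (open) term while discarding the remaining nonnegative ones bounds the left-hand side by $\max\{L,d_{ij}\}-\bigl(\max\{L,d_{ij}\}-\max\{L,d_{ij^\star}\}\bigr)=\max\{L,r\}$.

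Finally I would assemble the pieces: dividing the displayed inequality by $d^{h*}>0$ and subtracting $1$ shows that the right-hand side of~\eqref{eq:L-OPTR} is at most $\tfrac{1}{d^{h*}}\max\{L,r\}-1=\max\bigl\{LB-1,\ \tfrac{1}{d^{h*}}r-1\bigr\}$, and each term of this maximum is $\le\bar w$ by (i) and (ii) respectively. Hence $(\bar y,\bar w)$ satisfies~\eqref{eq:L-OPTR}. I expect step (i) to be the only delicate part: one must argue $\bar w\ge LB$ for \emph{all} feasible solutions rather than only optimal ones, which is precisely where the different nature of the bound $LB$ here — combined with the fact that~\eqref{eq:nPCYRN-wPush} already pushes $\bar w$ above the true regret of $\bar y$ — enters; the remainder is the same monotonicity-and-case-split bookkeeping as in the proof of Proposition~\ref{prop:xy-lift}.
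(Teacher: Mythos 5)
Your proof is correct and follows essentially the same route the paper intends (the paper simply refers back to the case-distinction argument of Proposition~\ref{prop:xy-lift}): you split on whether the closest open facility to $i$ is nearer than $d_{ij}$, use monotonicity of $t\mapsto\max\{LB\,d^{h*},t\}$, and compare against the bound. The only substantive addition is that you spell out explicitly why $\bar w\ge LB$ holds for \emph{every} feasible solution of \nPCYRN\ (via constraints \eqref{eq:nPCYRN-wPush} and the fact that $\bar y$ encodes a feasible \npCPR\ solution), a step the paper leaves implicit; this is a welcome clarification, not a different approach.
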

\begin{proof}
The proof follows the same arguments as the proof of Proposition \ref{prop:xy-lift}. 
\end{proof}

Note that compared to all previous propositions involving a lower bound, Proposition \eqref{lemma:optr} allows something slightly more powerful in practice: We can use the current bound of the LP-relaxation of \nPCYRN (with \eqref{eq:L-OPTR}) as $LB$ to iteratively create stronger versions of \eqref{eq:L-OPTR} (until at some points we get convergence, i.e., the LP-relaxation bound does not change anymore). This is similar to the situation for the lifted optimality cuts for the \pCP in \cite{GAAR2022}. This was not possible in case of the presented formulations and propositions for the \npCPA, as for these, the needed lower bounds are for the $z^h$-variables and the LP-relaxation value is the sum of the values of the $z^h$-variables in the relaxation, thus we do not get an individual bound on any $z^h$ from the LP-relaxation. In this case it may be tempting to think about using the LP-relaxation value of a $z^h$-variable to use as $LB^h$, but this is not valid, as there could be multiple optimal solutions to the LP-relaxation and in each of these optimal solutions an individual $z^h$ may take different values (and only the minimal of these values would be a valid lower bound for use as $LB^h$).

\subsection{Implementation details} \label{sec:npcr-implementation}
Note that in contrast to the \npCPA where running the preprocessing scheme (resp., solving the \pCP for $p \in \mathcal P$) is optional, for the \npCPR we need the values $d^{h*}$ due to constraints \eqref{eq:nPCYR-zPush}/\eqref{eq:nPCYRN-wPush} (and the strengthened version \eqref{eq:L-OPTR}). Thus, we always run the preprocessing scheme described in Section \ref{sec:preprocecssing} when solving the \npCPR.

The close relationship between \nPCY, \nPCYR and \nPCYRN allows to use the same separation algorithm for all three formulations with minimal 
variation. For \nPCYR, as the constraints \eqref{eq:nPCY-zPush} and \eqref{eq:nPCYR-zPush} (and their strengthened versions) are the same, we can use the exact same separation algorithm. We use the optimal solution values of the \pCP obtained in the preprocessing scheme as $LB^h$.
For constraints \eqref{eq:nPCYRN-wPush}/\eqref{eq:L-OPTR} of formulation \nPCYRN we have to divide the right-hand side by $d^{h*}$ and change the left-hand side to $w$. Since this division is just a constant-scaling, we can again use the same algorithm to find the inequality giving the maximum violation of \eqref{eq:nPCY-zPush} for a given $\inH$ and then scale by $d^{h*}$ and check if the obtained right-hand-side is larger than the current LP-relaxation value of $w$. We use the trivial lower bound of $1$ as initial $LB$ for use in \eqref{eq:L-OPTR}. Moreover, we also implemented a more sophisticated setting which iteratively updates the $LB$ used in \eqref{eq:L-OPTR} which is described next.

\paragraph{Enhanced separation procedure for \eqref{eq:L-OPTR}}

In this enhanced separation procedure, we make use of the fact discussed at the end of Section \ref{sec:milpr}, i.e., we use the objective function value of the LP-relaxation of the current branch-and-bound node as lower bound $LB$. In fact, this lower bound can be further improved in some cases: Note that \npCPR can only take values $d_{ij}/d^{h*}-1$ for some $\inI$, $\inJ$, $\inH$. Moreover, all the distances $d_{ij}$ in the instances we consider are integer. As a consequence, the smallest possible change in the objective function value caused by period $h$ is $1/d^{h*}$, which we denote as \emph{step size} in the following. The objective function value obtained by the LP-relaxation (i.e., the current lower bound) though does not follow this step size. This allows us to increase the lower bound $LB$ to the next smallest \emph{step}. Consider the following example:
\begin{example}
    Let the $d^{1*} = 25 \,(0.04), d^{2*} = 20 \,(0.05), d^{3*} = 10 \,(0.1)$,
    where the resulting step sizes are given in the braces. Furthermore, let the objective value of the LP-relaxation be 0.07. Thus the lower bound can be increased to 0.08, as the optimal objective function value
    can only take multiples of any step size and since it can not be lower than 0.07.
\end{example}
This potentially increased lower bound is calculated each time when the \texttt{UserCutCallback} or \texttt{Lazy\-Con\-straint\-Call\-back} is entered. We use the enhanced separation procedure not only in the root-node of the branch-and-bound tree, but also in other nodes of the tree. Naturally the obtained lower bound at a branch-and-bound node is only feasible for the subtree originating at this node. Thus when using this procedure any violated valid inequality \eqref{eq:L-OPTR} is added with the \texttt{local} option to CPLEX, which means it is only added for the current branch-and-bound and the nodes in the subtree originating at it.

\paragraph{Starting heuristics and primal heuristic}
We used the same starting heuristics and primal heuristics as described in Section \ref{sec:startHeuristic}, as we observed that the solutions obtained when considering the absolute regret objective are usually also quite good for the relative regret objective, thus we did not implement more specifically tailored heuristics.

\subsection{Results}\label{sec:reg_results}
In this section, we compare the following three settings for solving the \npCPR. The runs were made on the same machine as the computational study on the \npCPA in Section \ref{sec:computational}.

\begin{itemize}
    \item \R: The formulation \nPCYR with strengthened inequalities \eqref{eq:L-OPTR} and implementation as described in Section \ref{sec:npcr-implementation}, i.e., including the starting and primal heuristics.
    \item \RN: The formulation \nPCYRN with strengthened inequalities \eqref{eq:L-OPTR2} and implementation as described in \ref{sec:npcr-implementation} except the enhanced separation procedure.
    \item \RNL: \RN including the enhanced separation procedure. 
\end{itemize}

The results are shown in cumulative distribution function plots in \ref{fig:regret-results}, where in \ref{fig:regret-runtimes} the runtimes are plotted and in \ref{fig:regret-gap} the optimality gaps.

\begin{figure}
    \centering
    \begin{subfigure}{.5\textwidth}
        \centering
        \includegraphics[width=\linewidth]{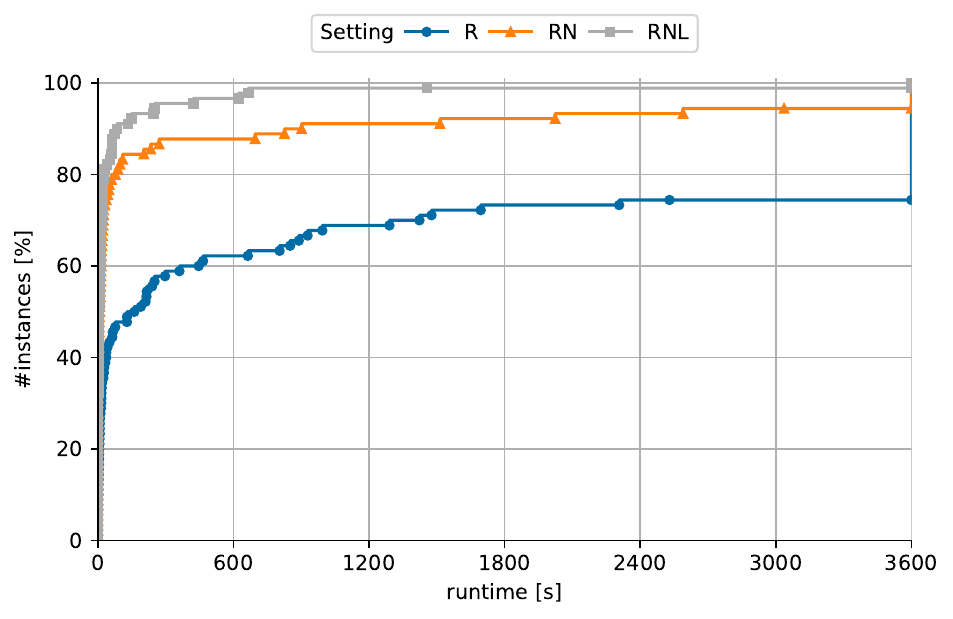}
        \caption{Runtime comparison}
        \label{fig:regret-runtimes}
    \end{subfigure}%
    \begin{subfigure}{.5\textwidth}
        \centering
        \includegraphics[width=\linewidth]{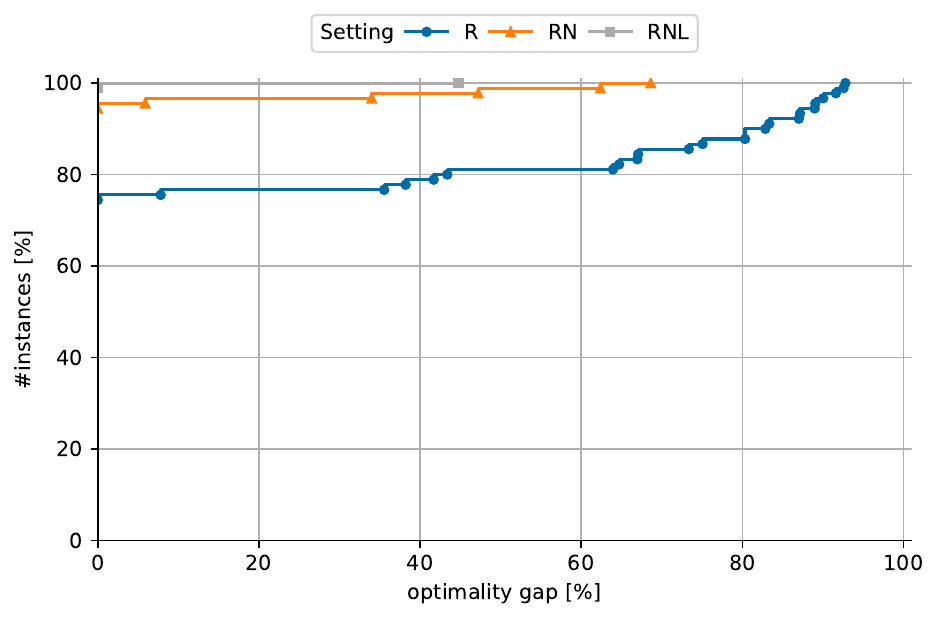}
        \caption{Optimality gaps comparison}
        \label{fig:regret-gap}
    \end{subfigure}
    \caption{Comparison of the different settings}
    \label{fig:regret-results}
\end{figure}

The figures show that both settings using formulation \nPCYRN outperform the setting using formulation \nPCYR. Moreover, using the enhanced separation procedure improves the performance, only one instance was not solved to optimality within the time limit of 3600 seconds. In Tables \ref{tab:pmed_regret} and 
\ref{tab:tsplib_regret}
the detailed results are shown. We see that with \RN and \RNL most of the instances can be solved within the root node and the improvement the enhanced separation procedure provides is visible in instances like 17, 22 and others of set \pmed. Interestingly, for instance set \tsplib the effect of the enhanced sepration procedure is less pronounced. For the \pmed instances it can be seen that the objective function value is at most 0.04 which means that for these instances the cost of nesting is very small. On the other hand, for the \tsplib instances, the range of objective function values is much larger, going up to 0.39 for instance \texttt{pr107}.

\begingroup\fontsize{6}{10}\selectfont
\begin{longtable}{lrrrrrrrrrrrrrr}
\caption{\label{tab:pmed_regret}Comparison of the different settings on the instance set \pmed for $\mathcal{P} = \left\{p, p+1, p+2\right\}$}\\
\toprule
\multicolumn{3}{c}{ } & \multicolumn{4}{c}{\R} & \multicolumn{4}{c}{\RN} & \multicolumn{4}{c}{\RNL} \\
\cmidrule(l){4-7} \cmidrule(l){8-11} \cmidrule(l){12-15}
idx & nNodes & p & t[s] & UB & g[\%] & bnb & t[s] & UB & g[\%] & bnb & t[s] & UB & g[\%] & bnb \\
\midrule
\endfirsthead
\toprule
\multicolumn{3}{c}{ } & \multicolumn{4}{c}{\R} & \multicolumn{4}{c}{\RN} & \multicolumn{4}{c}{\RNL} \\
\cmidrule(l){4-7} \cmidrule(l){8-11} \cmidrule(l){12-15}
idx & nNodes & p & t[s] & UB & g[\%] & bnb & t[s] & UB & g[\%] & bnb & t[s] & UB & g[\%] & bnb \\
\midrule
\endhead
\midrule
\multicolumn{15}{r}{Continued on next page} \\
\midrule
\endfoot
\bottomrule
\endlastfoot
1 & 100 & 5 & 33.44 & 0.03 & 0.00 & 4299 & 2.50 & 0.03 & 0.00 & 2 & \textbf{0.96} & 0.03 & 0.00 & 0 \\
2 & 100 & 10 & 240.19 & 0.04 & 0.00 & 29734 & 4.50 & 0.04 & 0.00 & 139 & \textbf{1.70} & 0.04 & 0.00 & 23 \\
3 & 100 & 10 & 66.25 & 0.01 & 0.00 & 6843 & 2.38 & 0.01 & 0.00 & 112 & \textbf{0.92} & 0.01 & 0.00 & 11 \\
4 & 100 & 20 & \textbf{0.48} & 0.00 & 0.00 & 0 & 0.49 & 0.00 & 0.00 & 0 & 0.53 & 0.00 & 0.00 & 0 \\
5 & 100 & 33 & 0.52 & 0.00 & 0.00 & 0 & \textbf{0.48} & 0.00 & 0.00 & 0 & 0.71 & 0.00 & 0.00 & 0 \\
6 & 200 & 5 & 663.87 & 0.03 & 0.00 & 30309 & \textbf{15.59} & 0.03 & 0.00 & 227 & 23.34 & 0.03 & 0.00 & 32 \\
7 & 200 & 10 & 216.79 & 0.05 & 0.00 & 10398 & 3.36 & 0.05 & 0.00 & 0 & \textbf{1.41} & 0.05 & 0.00 & 0 \\
8 & 200 & 20 & 3.96 & 0.02 & 0.00 & 54 & 2.68 & 0.02 & 0.00 & 0 & \textbf{1.48} & 0.02 & 0.00 & 0 \\
9 & 200 & 40 & 1.66 & 0.00 & 0.00 & 0 & 1.59 & 0.00 & 0.00 & 0 & \textbf{1.58} & 0.00 & 0.00 & 0 \\
10 & 200 & 67 & 1.72 & 0.00 & 0.00 & 0 & \textbf{1.67} & 0.00 & 0.00 & 0 & 1.69 & 0.00 & 0.00 & 0 \\
11 & 300 & 5 & 3.20 & 0.02 & 0.00 & 19 & 2.95 & 0.02 & 0.00 & 0 & \textbf{2.02} & 0.02 & 0.00 & 0 \\
12 & 300 & 10 & 9.73 & 0.02 & 0.00 & 189 & 13.66 & 0.02 & 0.00 & 91 & \textbf{2.41} & 0.02 & 0.00 & 0 \\
13 & 300 & 30 & 2.80 & 0.00 & 0.00 & 0 & 2.61 & 0.00 & 0.00 & 0 & \textbf{2.57} & 0.00 & 0.00 & 0 \\
14 & 300 & 60 & 4.37 & 0.00 & 0.00 & 0 & \textbf{4.36} & 0.00 & 0.00 & 0 & 4.42 & 0.00 & 0.00 & 0 \\
15 & 300 & 100 & 4.45 & 0.00 & 0.00 & 0 & \textbf{4.06} & 0.00 & 0.00 & 0 & 4.09 & 0.00 & 0.00 & 0 \\
16 & 400 & 5 & 28.04 & 0.02 & 0.00 & 415 & 5.30 & 0.02 & 0.00 & 3 & \textbf{3.04} & 0.02 & 0.00 & 7 \\
17 & 400 & 10 & 42.40 & 0.03 & 0.00 & 455 & 2023.61 & 0.03 & 0.00 & 17775 & \textbf{28.18} & 0.03 & 0.00 & 191 \\
18 & 400 & 40 & \textbf{5.75} & 0.00 & 0.00 & 0 & 6.10 & 0.00 & 0.00 & 0 & 5.87 & 0.00 & 0.00 & 0 \\
19 & 400 & 80 & \textbf{4.87} & 0.00 & 0.00 & 0 & 6.41 & 0.00 & 0.00 & 0 & 6.39 & 0.00 & 0.00 & 0 \\
20 & 400 & 133 & \textbf{5.33} & 0.00 & 0.00 & 0 & 6.45 & 0.00 & 0.00 & 0 & 6.56 & 0.00 & 0.00 & 0 \\
21 & 500 & 5 & 7.56 & 0.03 & 0.00 & 54 & 97.73 & 0.03 & 0.00 & 474 & \textbf{5.83} & 0.03 & 0.00 & 9 \\
22 & 500 & 10 & 1694.28 & 0.03 & 0.00 & 14421 & 2589.77 & 0.03 & 0.00 & 12489 & \textbf{84.71} & 0.03 & 0.00 & 320 \\
23 & 500 & 50 & \textbf{5.85} & 0.00 & 0.00 & 0 & 9.20 & 0.00 & 0.00 & 0 & 8.82 & 0.00 & 0.00 & 0 \\
24 & 500 & 100 & \textbf{7.18} & 0.00 & 0.00 & 0 & 8.77 & 0.00 & 0.00 & 0 & 8.77 & 0.00 & 0.00 & 0 \\
25 & 500 & 167 & \textbf{8.88} & 0.00 & 0.00 & 0 & 13.34 & 0.00 & 0.00 & 27 & 13.33 & 0.00 & 0.00 & 27 \\
26 & 600 & 5 & 15.80 & 0.03 & 0.00 & 29 & 271.23 & 0.03 & 0.00 & 1051 & \textbf{11.74} & 0.03 & 0.00 & 32 \\
27 & 600 & 10 & 128.86 & 0.03 & 0.00 & 2228 & TL & 0.03 & 5.88 & 12320 & \textbf{17.57} & 0.03 & 0.00 & 35 \\
28 & 600 & 60 & \textbf{8.76} & 0.00 & 0.00 & 0 & 10.99 & 0.00 & 0.00 & 0 & 10.91 & 0.00 & 0.00 & 0 \\
29 & 600 & 120 & \textbf{11.43} & 0.00 & 0.00 & 0 & 13.58 & 0.00 & 0.00 & 0 & 12.65 & 0.00 & 0.00 & 0 \\
30 & 600 & 200 & \textbf{14.98} & 0.00 & 0.00 & 0 & 18.09 & 0.00 & 0.00 & 0 & 18.98 & 0.00 & 0.00 & 0 \\
31 & 700 & 5 & \textbf{8.71} & 0.00 & 0.00 & 0 & 9.12 & 0.00 & 0.00 & 0 & 9.58 & 0.00 & 0.00 & 0 \\
32 & 700 & 10 & 211.53 & 0.03 & 0.00 & 360 & TL & 0.03 & 68.62 & 11051 & \textbf{132.47} & 0.03 & 0.00 & 374 \\
33 & 700 & 70 & \textbf{15.72} & 0.00 & 0.00 & 0 & 18.30 & 0.00 & 0.00 & 0 & 18.40 & 0.00 & 0.00 & 0 \\
34 & 700 & 140 & \textbf{17.05} & 0.00 & 0.00 & 0 & 19.84 & 0.00 & 0.00 & 0 & 19.98 & 0.00 & 0.00 & 0 \\
35 & 800 & 5 & \textbf{13.98} & 0.03 & 0.00 & 3 & 77.83 & 0.03 & 0.00 & 204 & 73.41 & 0.03 & 0.00 & 204 \\
36 & 800 & 10 & \textbf{64.02} & 0.04 & 0.00 & 116 & TL & 0.04 & 62.38 & 7925 & 149.47 & 0.04 & 0.00 & 289 \\
37 & 800 & 80 & \textbf{18.86} & 0.00 & 0.00 & 0 & 22.32 & 0.00 & 0.00 & 0 & 22.31 & 0.00 & 0.00 & 0 \\
38 & 900 & 5 & 77.04 & 0.04 & 0.00 & 276 & 203.46 & 0.04 & 0.00 & 329 & \textbf{20.01} & 0.04 & 0.00 & 0 \\
39 & 900 & 10 & 161.49 & 0.04 & 0.00 & 742 & TL & 0.05 & 47.22 & 6179 & \textbf{63.42} & 0.04 & 0.00 & 101 \\
40 & 900 & 90 & \textbf{24.52} & 0.00 & 0.00 & 0 & 24.67 & 0.00 & 0.00 & 0 & 24.68 & 0.00 & 0.00 & 0 \\
\end{longtable}
\endgroup{}
\begingroup\fontsize{6}{10}\selectfont
\begin{longtable}{lrrrrrrrrrrrr}
\caption{\label{tab:tsplib_regret}Comparison of the different settings on the instance set \tsplib for $\mathcal{P} = \left\{4, 5, 6\right\}$}\\
\toprule
  & \multicolumn{4}{c}{\R} & \multicolumn{4}{c}{\RN} & \multicolumn{4}{c}{\RNL} \\
\cmidrule(l){2-5} \cmidrule(l){6-9} \cmidrule(l){10-13}
f & t[s] & UB & g[\%] & bnb & t[s] & UB & g[\%] & bnb & t[s] & UB & g[\%] & bnb \\
\midrule
\endfirsthead
\toprule
  & \multicolumn{4}{c}{\R} & \multicolumn{4}{c}{\RN} & \multicolumn{4}{c}{\RNL} \\
\cmidrule(l){2-5} \cmidrule(l){6-9} \cmidrule(l){10-13}
f & t[s] & UB & g[\%] & bnb & t[s] & UB & g[\%] & bnb & t[s] & UB & g[\%] & bnb \\
\midrule
\endhead
\midrule
\multicolumn{13}{r}{Continued on next page} \\
\midrule
\endfoot
\bottomrule
\endlastfoot
eil51 & 3.00 & 0.11 & 0.00 & 714 & 0.89 & 0.11 & 0.00 & 67 & \textbf{0.33} & 0.11 & 0.00 & 13 \\
berlin52 & 0.86 & 0.02 & 0.00 & 33 & 0.26 & 0.02 & 0.00 & 0 & \textbf{0.21} & 0.02 & 0.00 & 0 \\
st70 & 0.54 & 0.04 & 0.00 & 40 & 1.05 & 0.04 & 0.00 & 10 & \textbf{0.22} & 0.04 & 0.00 & 0 \\
eil76 & 1.64 & 0.09 & 0.00 & 159 & 3.00 & 0.09 & 0.00 & 3 & \textbf{0.56} & 0.09 & 0.00 & 7 \\
pr76 & 128.80 & 0.14 & 0.00 & 22353 & \textbf{0.68} & 0.14 & 0.00 & 0 & 0.75 & 0.14 & 0.00 & 0 \\
rat99 & 889.29 & 0.10 & 0.00 & 101249 & 2.89 & 0.10 & 0.00 & 0 & \textbf{1.30} & 0.10 & 0.00 & 0 \\
kroD100 & 297.36 & 0.11 & 0.00 & 29862 & 1.48 & 0.11 & 0.00 & 0 & \textbf{1.18} & 0.11 & 0.00 & 0 \\
kroA100 & 465.79 & 0.08 & 0.00 & 55858 & 4.00 & 0.08 & 0.00 & 0 & \textbf{3.19} & 0.08 & 0.00 & 0 \\
rd100 & 26.95 & 0.04 & 0.00 & 1994 & \textbf{0.52} & 0.04 & 0.00 & 0 & 0.53 & 0.04 & 0.00 & 0 \\
kroE100 & 1423.15 & 0.15 & 0.00 & 162374 & 15.57 & 0.15 & 0.00 & 230 & \textbf{11.53} & 0.15 & 0.00 & 165 \\
kroC100 & 993.51 & 0.14 & 0.00 & 112109 & 7.12 & 0.14 & 0.00 & 68 & \textbf{5.49} & 0.14 & 0.00 & 35 \\
kroB100 & 190.38 & 0.10 & 0.00 & 20177 & 2.97 & 0.10 & 0.00 & 0 & \textbf{2.19} & 0.10 & 0.00 & 0 \\
eil101 & 37.11 & 0.11 & 0.00 & 2697 & 2.60 & 0.11 & 0.00 & 0 & \textbf{0.50} & 0.11 & 0.00 & 0 \\
lin105 & 1476.80 & 0.11 & 0.00 & 160806 & 6.17 & 0.11 & 0.00 & 99 & \textbf{5.03} & 0.11 & 0.00 & 66 \\
pr107 & 928.37 & 0.39 & 0.00 & 100494 & 0.93 & 0.39 & 0.00 & 0 & \textbf{0.88} & 0.39 & 0.00 & 0 \\
pr124 & \textbf{0.49} & 0.03 & 0.00 & 0 & 0.54 & 0.03 & 0.00 & 0 & \textbf{0.49} & 0.03 & 0.00 & 0 \\
bier127 & 51.58 & 0.10 & 0.00 & 3647 & 2.28 & 0.10 & 0.00 & 34 & \textbf{1.86} & 0.10 & 0.00 & 18 \\
ch130 & 250.96 & 0.09 & 0.00 & 19738 & 4.22 & 0.09 & 0.00 & 32 & \textbf{2.02} & 0.09 & 0.00 & 4 \\
pr136 & 36.55 & 0.04 & 0.00 & 2454 & 2.38 & 0.04 & 0.00 & 0 & \textbf{2.18} & 0.04 & 0.00 & 0 \\
pr144 & 215.24 & 0.06 & 0.00 & 12996 & \textbf{4.01} & 0.06 & 0.00 & 47 & 5.62 & 0.06 & 0.00 & 61 \\
ch150 & 1290.34 & 0.07 & 0.00 & 77749 & 15.92 & 0.07 & 0.00 & 168 & \textbf{7.00} & 0.07 & 0.00 & 32 \\
kroB150 & TL & 0.10 & 7.80 & 178696 & \textbf{2.95} & 0.10 & 0.00 & 0 & 3.06 & 0.10 & 0.00 & 0 \\
kroA150 & TL & 0.10 & 35.54 & 138298 & 48.60 & 0.10 & 0.00 & 335 & \textbf{41.64} & 0.10 & 0.00 & 241 \\
pr152 & TL & 0.19 & 43.37 & 156245 & \textbf{0.91} & 0.19 & 0.00 & 0 & 1.24 & 0.19 & 0.00 & 0 \\
u159 & 804.38 & 0.12 & 0.00 & 43656 & 2.54 & 0.12 & 0.00 & 0 & \textbf{2.51} & 0.12 & 0.00 & 0 \\
rat195 & TL & 0.17 & 67.07 & 77032 & 36.94 & 0.17 & 0.00 & 269 & \textbf{13.21} & 0.17 & 0.00 & 43 \\
d198 & 361.00 & 0.22 & 0.00 & 13048 & 5.18 & 0.22 & 0.00 & 93 & \textbf{1.72} & 0.22 & 0.00 & 26 \\
kroB200 & TL & 0.15 & 73.35 & 75460 & \textbf{10.89} & 0.14 & 0.00 & 33 & 11.50 & 0.14 & 0.00 & 48 \\
kroA200 & TL & 0.15 & 75.06 & 74550 & \textbf{44.57} & 0.11 & 0.00 & 120 & 63.14 & 0.11 & 0.00 & 256 \\
ts225 & TL & 0.09 & 38.21 & 71546 & 51.04 & 0.09 & 0.00 & 85 & \textbf{7.24} & 0.09 & 0.00 & 2 \\
tsp225 & TL & 0.15 & 63.94 & 56608 & 3.40 & 0.13 & 0.00 & 0 & \textbf{2.22} & 0.13 & 0.00 & 0 \\
pr226 & 851.04 & 0.06 & 0.00 & 31617 & 12.23 & 0.06 & 0.00 & 56 & \textbf{10.87} & 0.06 & 0.00 & 44 \\
gil262 & 2307.01 & 0.05 & 0.00 & 33471 & 2.81 & 0.05 & 0.00 & 0 & \textbf{1.68} & 0.05 & 0.00 & 0 \\
pr264 & 2529.76 & 0.21 & 0.00 & 67087 & 1.55 & 0.21 & 0.00 & 0 & \textbf{1.27} & 0.21 & 0.00 & 0 \\
a280 & TL & 0.21 & 83.33 & 34970 & 696.16 & 0.16 & 0.00 & 1857 & \textbf{423.27} & 0.16 & 0.00 & 1016 \\
pr299 & TL & 0.10 & 87.01 & 34766 & \textbf{234.22} & 0.07 & 0.00 & 609 & 250.97 & 0.07 & 0.00 & 366 \\
lin318 & TL & 0.14 & 82.86 & 31958 & 25.76 & 0.11 & 0.00 & 80 & \textbf{22.37} & 0.11 & 0.00 & 21 \\
linhp318 & TL & 0.14 & 82.86 & 31940 & 25.87 & 0.11 & 0.00 & 80 & \textbf{21.09} & 0.11 & 0.00 & 21 \\
rd400 & TL & 0.06 & 64.71 & 19414 & 60.48 & 0.06 & 0.00 & 105 & \textbf{29.18} & 0.06 & 0.00 & 46 \\
fl417 & 446.30 & 0.04 & 0.00 & 5195 & \textbf{1.79} & 0.04 & 0.00 & 0 & 1.88 & 0.04 & 0.00 & 0 \\
pr439 & TL & 0.14 & 80.33 & 18388 & 89.70 & 0.12 & 0.00 & 231 & \textbf{54.96} & 0.12 & 0.00 & 127 \\
pcb442 & TL & 0.17 & 87.16 & 17840 & \textbf{826.11} & 0.11 & 0.00 & 959 & 1456.50 & 0.11 & 0.00 & 1120 \\
d493 & TL & 0.14 & 66.97 & 13677 & \textbf{31.17} & 0.13 & 0.00 & 33 & 62.35 & 0.13 & 0.00 & 57 \\
u574 & TL & 0.24 & 88.97 & 10790 & 22.00 & 0.16 & 0.00 & 0 & \textbf{11.01} & 0.16 & 0.00 & 0 \\
rat575 & TL & 0.21 & 90.03 & 8069 & 1514.22 & 0.14 & 0.00 & 1285 & \textbf{245.72} & 0.14 & 0.00 & 181 \\
p654 & TL & 0.12 & 41.69 & 10939 & 16.01 & 0.12 & 0.00 & 19 & \textbf{12.32} & 0.12 & 0.00 & 15 \\
d657 & TL & 0.22 & 89.06 & 8460 & 109.94 & 0.15 & 0.00 & 60 & \textbf{59.95} & 0.15 & 0.00 & 23 \\
rat783 & TL & 0.27 & 92.56 & 5473 & 3036.54 & 0.14 & 0.00 & 1089 & \textbf{622.85} & 0.14 & 0.00 & 194 \\
pr1002 & TL & 0.29 & 92.79 & 3650 & TL & 0.17 & 33.99 & 410 & TL & 0.18 & 44.75 & 278 \\
u1060 & TL & 0.11 & 91.60 & 3499 & 901.77 & 0.07 & 0.00 & 449 & \textbf{666.29} & 0.07 & 0.00 & 315 \\
\end{longtable}
\endgroup{}

\section{Conclusion and outlook} \label{sec:conclusion}
In this work, we introduce the nested $p$-center problem, which is a multi-period variant of the well-known $p$-center problem. The use of the \emph{nesting} concept allows to obtain solutions, which are consistent over the considered time horizon, i.e., facilities which are opened in a given time period stay open for the later time periods. This is important in real-life applications, as closing (and potential later re-opening) of facilities between time periods can be quite undesirable due to i) resulting monetary and environmental costs and ii) non-acceptance of such solutions by decision makers and the general public, as they are usually not consistent with intuition. 

Following \cite{McGarvey2022} who recently studied a nested version of the $p$-median problem, we consider two different versions of our problem, with the difference being the objective function. The first version considers the sum of the absolute regrets over all time periods, and the second version considers minimizing the maximum relative regret over the time periods, where the absolute regret of a time period is defined as the additional cost incurred due to the nesting constraint compared to just solving the problem without the nesting constraint.

We present three mixed-integer programming formulations for the version with absolute regret objective and two formulations for the version with relative regret objective. For all formulations, we present valid inequalities. Based on the formulations and the valid inequalities, we develop branch-and-bound and branch-and-cut solution algorithms. These algorithms include a preprocessing procedure that exploits the nesting property and also begins heuristics and primal heuristics.

We conducted a computational study on instances from the literature for the $p$-center problem which we adapted to our problems. The computational study illustrates that the preprocessing phase brings a significant
improvement in performance. Moreover, using our approaches, most of the instances could be solved within a time limit of 3600 seconds, showing that incorporating a multi-period perspective into the $p$-center problem is computationally feasible. We also analysed the effect of nesting on the solution cost and the number of open facilities and found that using the nesting concept, the relative increase in the solution cost stays within 15 \% compared to the optimal $p$-center solutions of the individual time periods, while the number of open facilities increases by up to 150 \% when considering the individual $p$-center solutions against the obtained nested solution.

There are several avenues for future work: One could try to further exploit the nesting property by, e.g., implementing cut pooling, where violated inequalities obtained when solving the $p$-center problem in preprocessing are stored and added to the nested $p$-center problem to start with an initial set of inequalities. Moreover, to be able to tackle larger-scale instances of the problems, it could be interesting to develop (meta-)heuristics. The nesting constraint could offer an interesting challenge in this regard. It could also be fruitful to apply the nesting concept to different versions of the $p$-center problem, in particular to stochastic and robust versions of it.

\section*{Acknowledgments}

This research was funded in whole, or in part, by the Austrian Science Fund 
(FWF)
[P 35160-N]. For the purpose of open access, the author has applied a CC BY 
public 
copyright licence to any Author Accepted Manuscript version arising from this submission. It is also supported by the Johannes Kepler University Linz, Linz Institute
of Technology (Project LIT-2021-10-YOU-216).

%Bibliography
\bibliography{references}  
\bibliographystyle{elsarticle-harv}

\end{document}